\numberwithin{equation}{section}
\newtheoremstyle{note}
{1em}
{1em}
{}
{}
{\bfseries}
{: }
{.5em}
{}
\newtheorem{theorem}{Theorem}[section]
\newtheorem{lemma}[theorem]{Lemma}
\newtheorem{proposition}[theorem]{Proposition}
\newtheorem{corollary}[theorem]{Corollary}
\theoremstyle{note}
\newtheorem{remark}[theorem]{Remark}
\newtheorem{definition}[theorem]{Definition}
\newtheorem*{case1}{\textbf{Case 1}}
\newtheorem*{case2}{\textbf{Case 2}}
\newtheorem{claim}[theorem]{Claim}
\newcommand{\N}{{\mathbb{N}}}
\newcommand{\R}{{\mathbb{R}}}
\newcommand{\C}{{\mathbb{C}}}
\newcommand{\tn}[1]{{\left\vert\kern-0.25ex\left\vert\kern-0.25ex\left\vert #1 
    \right\vert\kern-0.25ex\right\vert\kern-0.25ex\right\vert}}
\newcommand{\OH}{\mathrm{OH}}
\newcommand{\eps}{\varepsilon}
\newcommand{\M}{{\mathrm{M}}}
\newcommand{\cZ}{{\mathcal{Z}}}
\newcommand{\vertiii}[1]{{\left\vert\kern-0.25ex\left\vert\kern-0.25ex\left\vert #1 
    \right\vert\kern-0.25ex\right\vert\kern-0.25ex\right\vert}}
\DeclareMathOperator{\cb}{cb}
\DeclareMathOperator{\CB}{CB}
\DeclareMathOperator{\supp}{supp}
\DeclareMathOperator{\spn}{span}
\DeclareMathOperator{\MIN}{MIN}
\DeclareMathOperator{\MAX}{MAX}
\title[Coarse geometry and  $\ell_1$ and $c_0$-sums of operator spaces]{Coarse geometry of operator spaces and complete isomorphic embeddings into  $\ell_1$ and $c_0$-sums of operator spaces  }
\author[B. M. Braga]{Bruno M. Braga}
\address[B. M. Braga]{PUC-Rio, Rua Marquês de São Vicente 225, Rio de Janeiro, RJ, Brazil.}
\email{demendoncabraga@gmail.com}
\urladdr{https://sites.google.com/site/demendoncabraga/}
\thanks{B. M. Braga was partially supported by NSF Grant   DMS 2054860. \\ T.~Oikhberg was partially supported by the NSF award 1912897.}
\author[T. Oikhberg]{Timur Oikhberg}
\address[T. Oikhberg]{Department of Mathematics, University of Illinois, Urbana IL 61801, USA}
\email{oikhberg@illinois.edu}
\urladdr{https://faculty.math.illinois.edu/~oikhberg/}
\subjclass[2010]{Primary:  47L25, 46L07, 46B80} 
\begin{document}
 
\maketitle

 \begin{abstract}
The nonlinear geometry of operator spaces has recently started to be investigated. Many  notions of nonlinear embeddability have been introduced so far, but, as noticed before by other authors, it was not clear whether they could be considered  ``correct notions''. The main goal of these notes is to provide the missing evidence to support that \emph{almost complete coarse embeddability} is ``a correct notion''. This is done by proving results about the complete  isomorphic theory of $\ell_1$-sums of certain operators spaces. Several results on the complete isomorphic theory of $c_0$-sums of operator spaces are also obtained. 
 \end{abstract}


\section{Introduction}\label{SectionIntro}
This article concerns the   nonlinear theory of operator spaces, which   recently started to be investigated  (\cite{BragaChavezDominguez2020PAMS,BragaChavezDominguezSinclair,Braga2021OpSp}), as well as their  isomorphic theory.  We refer the reader to Section \ref{SectionPrelim} for the background in operator space theory needed for these notes. For now, we only recall some basics: an \emph{operator space} is a Banach space $X$ together with an isometric embedding into $B(H)$ ---  the space of bounded operators on some Hilbert space $H$. This isometric embedding allows us to see $X$ as a subspace of $B(H)$ and each $\M_n(X)$ as a subspace of $\M_n(B(H))\cong B(H^{\oplus n})$; where  $\M_n(X)$ denotes the space of $n$-by-$n$ matrices with entries in $X$ and $H^{\oplus n}$ denotes the $\ell_2$-sum of $n$ copies of $H$.   The operator norm of $ B(H^{\oplus n})$   induces a Banach norm on each $\M_n(X)$, which we denote by $\|\cdot \|_{\M_n(X)}$. Given another operator space $Y$,  a subset $A\subset X$, and a map $f\colon A\to Y$, the \emph{$n$-th amplification of $f$} is the map  $f_n\colon\M_n(A)\to \M_n(Y)$  given by  
\[f_n([x_{ij}])=[f(x_{ij})]\ \text{ for all }\ [x_{ij}]\in \M_n(X).\]
If $f$ is a linear operator,  so is each $f_n$ and we denote its operator norm by $\|f_n\|_n$. We then say that $f$ is \emph{completely bounded} if its \emph{$\cb$-norm} is finite, i.e., if 
\[\|f\|_{\cb}=\sup_{n\in\N}\|f_n\|_n<\infty.\]
Completely bounded maps then   induce the notions of complete isomorphisms and complete isomorphic embeddings between operator spaces.

\subsection{The nonlinear theory of operator spaces} Complete   boundedness  naturally inspire the following version of coarseness for nonlinear maps between operator spaces: a   map $f\colon X\to Y$ between operator spaces is \emph{completely coarse} if for all $r>0$ there is $s>0$ such that \[\|[x_{ij}]-[y_{ij}]\|_{\M_n(X)}\leq r\ \text{ implies }\ \|[f(x_{ij})]-[f(y_{ij})]\|_{\M_n(Y)}\leq s\]
for all $n\in\N$ and all $[x_{ij}], [y_{ij}]\in \M_n(X)$.\footnote{The definition of a \emph{coarse} map between Banach spaces is precisely this one for $n=1$.} However, as shown in   \cite[Theorem 1.1]{BragaChavezDominguez2020PAMS},  completely coarse maps are automatically $\R$-linear. Therefore, this notion does not lead to an interesting nonlinear theory of operator spaces, which has  led to   a search for   ``correct'' notions of nonlinear morphisms, embeddings, and equivalences  between operator spaces. In order to remedy this issue,  \cite{BragaChavezDominguez2020PAMS}  
  proposed to look at the coarse version of \emph{almost} complete isomorphic embeddings, instead of complete isomorphic embeddings.  
  Precisely:

 \begin{definition}(\cite[Definition 4.1]{BragaChavezDominguez2020PAMS})
 Let $X$ and $Y$ be operator spaces. 
 \begin{enumerate}
 \item A sequence $(f^n\colon X\to Y)_n$ of linear operators is called an \emph{almost complete isomorphic embedding} if the amplifications 
 \[\Big(f^n_n\colon {\M_n(X)}\to \M_n(Y)\Big)_{n}\] 
 are equi-isomorphic embeddings.\footnote{We call a sequence $(g_n:X_n\to Y_n)_n$ of isomorphic embeddings \emph{equi-isomorphic} if $\sup_n\|g_n\|,\sup_{n\in\N}\|g_n^{-1}\|<\infty$, where $g^{-1}_n$ is defined only on the image of $g_n$.}
 
 \item 
  A sequence $(f^n\colon X\to Y)_n$ of maps is called an \emph{almost complete coarse embedding} if the amplifications 
 \[\Big(f^n_n\colon {\M_n(X)}\to \M_n(Y)\Big)_{n}\] 
 are equi-coarse embeddings.\footnote{Recall, a family of maps $(f_n:X_n\to Y_n)$ between metric spaces $(X_n,d_n)$ and $(Y_n,\partial_n)$ are \emph{equi-coarse embeddings} if for all $r>0$ there is $s>0$ such that (1) $d_n(x,z)\leq r$ implies $\partial_n(f_n(x),f_n(z))\leq s$ and (2) $d_n(x,z)\geq s$ implies $\partial_n(f_n(x),f_n(z))\geq r$.} 
 \end{enumerate}
 \end{definition}

  The main results of \cite{BragaChavezDominguez2020PAMS,BragaChavezDominguezSinclair}
  show that almost complete coarse embeddability is strictly weaker than almost complete isomorphic embeddability but still strong enough to capture linear aspects of   operator space structures  (see, for instance, \cite[Theorem 1.2]{BragaChavezDominguez2020PAMS} and  \cite[Theorems 1.4 and 1.6]{BragaChavezDominguezSinclair}). 
  
  There is however a weakness in the methods of \cite{BragaChavezDominguezSinclair}. Precisely, although 
  \cite[Theorem 1.4]{BragaChavezDominguezSinclair} gives examples of operator spaces $X$ and $Y$ such that $X$ almost completely coarsely embeds into $Y$ but does not almost completely isomorphically embed into $Y$, the restriction for the latter happens already in the Banach level, i.e., $X$ does not even isomorphically embed into $Y$. The next definition was introduced by the first named author in \cite{Braga2021OpSp} precisely to fix this problem and it is the  notion of nonlinear embeddability which we deal with in these notes. Given a Banach space $X$, $B_X$ denotes its closed unit ball.

\begin{definition}(\cite[Definitions 1.2 and 1.6]{Braga2021OpSp})
Let $X$ and $Y$ be operator spaces. 
\begin{enumerate}
\item
We say that the \emph{bounded
subsets of $X$ almost completely coarsely embed into $Y$} if there is a sequence of maps
$(f^n \colon n \cdot B_X \to Y )_n$ such that the amplifications
\[\Big(f^n_n\restriction _{n\cdot B_{\M_n(X)}}\colon n\cdot B_{\M_n(X)}\to \M_n(Y)\Big)_{n}\]
are equi-coarse embeddings.\footnote{Notice that $B_{\M_n(X)}$ is contained  in $\M_n(B_X)$, so   $f^n_n\restriction _{n\cdot B_{\M_n(X)}}$ is well defined.}
\item We say that the \emph{bounded
subsets of $X$ and $Y$ are almost completely coarsely equivalent} if there is a sequence of bijections $(f^n \colon  X \to Y )_n$ such that, letting $g^n=(f^n)^{-1}$, the amplifications
\[\Big(f^n_n\restriction _{n\cdot B_{\M_n(X)}}\colon n\cdot B_{\M_n(X)}\to \M_n(Y)\Big)_{n}\text{ and }\Big(g^n_n\restriction _{n\cdot B_{\M_n(Y)}}\colon n\cdot B_{\M_n(Y)}\to \M_n(X)\Big)_{n}\]
are equi-coarse embeddings. 
\end{enumerate}
\end{definition}

Although these  notions of embeddability and equivalence are very weak, they are still strong enough to capture some of the linear aspects of operator spaces. For instance, if the bounded subsets of an operator space $X$ almost completely coarsely embed into G. Pisier's operator Hilbert space $\OH$, then $X$ completely isomorphically embeds into $\OH$ (\cite[Corollary 1.5]{Braga2021OpSp}). Also, 	if $R$ and $C$ are the row and the column operator spaces, respectively,\footnote{See Section \ref{SectionPrelim} for the precise definition of those operator spaces} and $(R,C)_\theta$ is the complex interpolation space with parameter $\theta$ (see \cite[Section 2.7]{Pisier-OS-book} for interpolation of operator spaces), where $\theta\in [0,1]$, then  \cite[Theorem 1.3]{Braga2021OpSp}  shows that the family $((R,C)_\theta)_{\theta\in [0,1]}$ is incomparable with respect to almost complete coarse embeddability of bounded subsets. 

On the other hand, this notion of embeddability is indeed  weaker than complete isomorphic embeddability and the reason for that does not occur in the Banach level.  Precisely:

\begin{theorem}\emph{(}\cite[Theorem 1.8]{Braga2021OpSp}\emph{)}
 There are operator spaces $X$ and $Y $ such that
 \begin{enumerate}
 \item  $X$ linearly isomorphically embeds into $Y$,
 \item
 $X$ does not completely isomorphically embed into $Y$, and
\item the bounded subsets of $X$ and $Y $ are almost completely coarsely equivalent.
 \end{enumerate}\label{ThmBragaOpSpIsrael}
\end{theorem}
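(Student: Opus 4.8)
The plan is to exhibit $X$ and $Y$ explicitly as $c_0$-direct sums of the finite-dimensional row and column spaces: $X=\big(\bigoplus_{m\in\N}R_m\big)_{c_0}$ and $Y=\big(\bigoplus_{m\in\N}C_m\big)_{c_0}$ (if the construction in (3) requires it, one repeats each $R_m$, resp.\ $C_m$, countably often, which changes nothing essential but provides room for the maps below). As Banach spaces both $X$ and $Y$ are isometric to $\big(\bigoplus_m\ell_2^m\big)_{c_0}$, since $R_m$ and $C_m$ have the same underlying Euclidean space; thus the formal identity is a linear isometric embedding of $X$ into $Y$, which gives (1).

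For (2) I would argue by localisation. Recall that $\M_n\big((\bigoplus_m C_m)_{c_0}\big)=\big(\bigoplus_m\M_n(C_m)\big)_{c_0}$ with norm $\sup_m\|\cdot\|_{\M_n(C_m)}$, that $\big\|\sum_{j=1}^n e_{1j}\otimes c_j\big\|_{\M_n(C_m)}$ is the operator norm of the $m\times n$ matrix with columns $c_1,\dots,c_n$, and that $\big\|\sum_{j=1}^n e_{1j}\otimes e_{1j}\big\|_{\M_n(R_n)}=\sqrt n$. Suppose $u\colon X\to Y$ were a complete isomorphic embedding with $\|u\|_{\cb},\|u^{-1}\|_{\cb}\le K$. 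Restricting $u$ to the summand $R_n\subset X$ and evaluating on $\sum_{j=1}^n e_{1j}\otimes e_{1j}\in\M_n(R_n)$, the lower cb-bound forces $\sup_m\big\|P_m\circ u|_{R_n}\big\|\ge\sqrt n/K$, the norms being taken as operators $\ell_2^n\to\ell_2^m$ and $P_m\colon Y\to C_m$ the (completely contractive) coordinate projection; but $\|P_m\circ u|_{R_n}\|\le\|u\|\le\|u\|_{\cb}\le K$, so $\sqrt n\le K^2$, which is absurd once $n>K^4$. Hence $X$ does not completely isomorphically embed into $Y$.

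The heart of the matter is (3). I would construct, for every $n$, a bijection $f^n\colon X\to Y$ such that $f^n_n\restriction_{n\cdot B_{\M_n(X)}}$ and $g^n_n\restriction_{n\cdot B_{\M_n(Y)}}$, with $g^n=(f^n)^{-1}$, are coarse embeddings with moduli independent of $n$. The mechanism I would exploit is that the ``row/column defect'' is concentrated in diagonal-type matrices and can be diffused in a $c_0$-sum: a norm-one $Z\in\M_n(R_m)$ whose entries are scalar multiples of distinct basis vectors has $\M_n(C_m)$-norm of order $\sqrt{\min(m,n)}$, because that norm is an $\ell_2$-sum over the entries, whereas if one routes those entries into \emph{different} summands of $Y$ the $\ell_2$-sum is replaced by a supremum and the norm stays bounded. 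Concretely, $f^n$ would act inside each Hilbertian summand of $X$ by a net-based, Aharoni-style ``diffuse identity'' into the narrow column summands of $Y$, following an $n$-adapted dictionary that refines wide row summands into many narrow column summands (and, for $g^n$, bundles column summands into row summands). One then verifies, using only the descriptions of $\|\cdot\|_{\M_n(R_m)}$ and $\|\cdot\|_{\M_n(C_m)}$ as operator norms of rectangular block matrices together with the fact that the $c_0$-norm is a supremum, that the uniform coarse-embedding inequalities hold, and finally perturbs the rough almost-bijections thus produced into genuine bijections $X\to Y$ (using that $X$ and $Y$ are linearly isometric) without disturbing the large-scale behaviour on bounded sets.

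The main obstacle is precisely this construction, because the two obvious strategies pull against each other: routing a row summand into a single column summand inflates distances by a factor growing with the dimension, while routing it into many column summands by an isometric-like map deflates them (a Euclidean vector spread over many $c_0$-coordinates has much smaller supremum norm). One must arrange that \emph{neither} pathology occurs on \emph{any} pair of points of $n\cdot B_{\M_n(X)}$, uniformly in $n$, and simultaneously for $f^n_n$ and $(f^n)^{-1}_n$. I expect the resolution to require (i) a normal form for elements of $\M_n$ of a $c_0$-sum of homogeneous Hilbertian spaces that separates ``row mass'' from ``column mass'', (ii) a diffuse, net-based choice of $f^n$ whose scale is tuned to $n$ so that the inflation and deflation factors are both absorbed into one fixed modulus $\omega$, and (iii) if the bare row/column pair turns out to be too rigid for the equivalence while still realising (1) and (2), replacing $R_m$ and $C_m$ by more flexible homogeneous building blocks with the same linear but distinct complete structure.
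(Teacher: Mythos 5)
Your items (1) and (2) are fine: the formal identity $(\bigoplus_m R_m)_{c_0}\to(\bigoplus_m C_m)_{c_0}$ is indeed a linear isometry, and your localisation argument for (2) is correct --- evaluating $u_n$ on $\sum_{j}e_{1j}\otimes e_{1j}\in\M_n(R_n)$, whose $\M_n(R_n)$-norm is $\sqrt n$, and noting that the $\M_n(C_m)$-norm of $\sum_j e_{1j}\otimes P_m u(e_{1j})$ is just the operator norm of $P_m u\restriction_{R_n}\colon \ell_2^n\to\ell_2^m$, which is at most $\|u\|$, does force $\sqrt n\le \|u\|\,\|u^{-1}\|_{\cb}\,\|u\|_{\cb}$-type bounds and hence a contradiction for large $n$.

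The genuine gap is item (3), which is the entire content of the theorem. What you offer there is not a proof but a programme: an unspecified ``net-based, Aharoni-style diffuse identity'' adapted to $n$, an unproved ``normal form'' separating row and column mass, a perturbation of almost-bijections into bijections, and, in your own point (iii), the admission that the pair $(R_m,C_m)$ may have to be replaced by other building blocks if it turns out to be too rigid. Note also that the mechanism you describe controls norms of particular diagonal-type elements, whereas what is needed is two-sided control of \emph{all} pairwise distances in $n\cdot B_{\M_n(X)}$ under a single bijection $f^n$ \emph{and} the same control for $(f^n)^{-1}$ on $n\cdot B_{\M_n(Y)}$, with moduli independent of $n$; the tension you yourself identify (a single target summand inflates, many target summands deflate) is exactly why no such map is exhibited, and there is no evidence that one exists for this pair. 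The paper's route (following \cite[Theorem 1.8]{Braga2021OpSp}, and strengthened here in Theorem \ref{Thm.Main.Nonlinear}) sidesteps hand-built bijections entirely: one takes a completely bounded Banach quotient $Q\colon \MAX(L_1)\to\MIN(\ell_2)$, forms the weighted spaces $Y_m$ with norms $\max\{\|y\|,2^m\|Q(y)\|\}$ and the $\ell_1$-sum $\cZ(Q)$, and invokes the general result (Theorem \ref{ThmBraga021OpSp}) that the bounded subsets of $\cZ(Q)$ and $X\oplus\ker(\tilde Q)$ are automatically almost completely coarsely equivalent; the non-embedding is then a cb-norm estimate (Lemma \ref{Lemma1} in the strengthened form). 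Without an analogue of that machinery, or an explicit construction verifying the equi-coarse inequalities for your $c_0$-sums, your proposal does not establish (3).
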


We point out however that Theorem \ref{ThmBragaOpSpIsrael}  only  partially fixes the issue brought up above with the current state of the nonlinear theory of operator spaces. Indeed, if $X$ and $Y$ are given by Theorem \ref{ThmBragaOpSpIsrael}, then, while the bounded subsets of $X$ almost completely coarsely embed into $Y$, $X$ does not completely isomorphically embed into $Y$, and the reason for that does not happen in the Banach level, the methods in \cite{Braga2021OpSp} were not strong enough to guarantee that $X$ does not \emph{almost} completely isomorphically embed into $Y$ --- which would be a more desirable result since the bounded subsets of $X$ only \emph{almost} completely coarsely embed into $Y$.

The main goal of these notes is to resolve this issue. Philosophically speaking, this shows that the notion of almost completely coarse embeddability of subsets is ``a correct one'' --- we emphasise the  indefinite article here. With this interpretation   in mind, this paper finishes the work initiated in \cite{BragaChavezDominguez2020PAMS}, and continued in \cite{BragaChavezDominguezSinclair,Braga2021OpSp}, of showing that there is a genuinely  interesting and highly nontrivial nonlinear theory for operator spaces.

 \subsection{Embeddings into $\ell_1$-sums}
 We now describe our main result on the nonlinear theory of operator spaces. The following  strengthens  Theorem \ref{ThmBragaOpSpIsrael} and  answers the question asked in the paragraph following \cite[Theorem 1.8]{Braga2021OpSp}.  

\begin{theorem}\label{Thm.Main.Nonlinear}
There are separable operator spaces $X$ and $Y$ such that 
\begin{enumerate}
\item $X$ isomorphically embeds into $Y$,
\item $X$ does not almost completely isomorphically embed into $Y$, and
\item the bounded subsets of $X$ and $Y$ are almost completely coarsely equivalent.
\end{enumerate}
\end{theorem}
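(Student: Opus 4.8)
The plan is to realise both spaces as operator-space $\ell_1$-sums of finite-dimensional operator spaces, $X=\big(\bigoplus_n E_n\big)_{\ell_1}$ and $Y=\big(\bigoplus_n F_n\big)_{\ell_1}$, where for each $n$ the operator spaces $E_n$ and $F_n$ are built on the \emph{same} underlying Banach space; then (1) is immediate, the embedding being the formal identity. The $E_n$ and $F_n$ are to be taken from a family of homogeneous Hilbertian operator spaces whose matrix levels are explicitly understood --- column and row spaces $C_m,R_m$, their intersections, or Pisier's $\OH_m$ --- with $m$-dependent parameters tuned so that $d_{\cb}(E_n,F_n)\to\infty$ \emph{and}, more importantly, so that the failure of a complete embedding is already witnessed at the $n$-th matrix level (this is the point that ``upgrades'' Theorem~\ref{ThmBragaOpSpIsrael}), while the coarser metric data carried by the amplifications of $E_n$ and $F_n$ stay essentially the same.

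For (2), argue by contradiction. If $(f^m\colon X\to Y)_m$ is an almost complete isomorphic embedding with constant $C$, then for $k\le m$ the restriction $f^m\restriction_{E_k}\colon E_k\to Y$ has norm at most $C$ and, since amplifications are computed entrywise and the summand inclusion $\M_m(E_k)\hookrightarrow\M_m(X)$ is completely isometric, its $m$-th amplification $\big(f^m\restriction_{E_k}\big)_m=f^m_m\restriction_{\M_m(E_k)}$ is bounded below by $1/C$. In particular, for every $k$ there is a linear map $E_k\to Y$ which is a $C$-isomorphic embedding at the $k$-th matrix level. The crux is then a structural statement about $\ell_1$-sums: a linear map from a finite-dimensional operator space $E$ into $\big(\bigoplus_n F_n\big)_{\ell_1}$ that is bounded below at the $m$-th matrix level can, after a small perturbation, be taken to have values in $\big(\bigoplus_{n\in S}F_n\big)_{\ell_1}$ for a set $S$ whose size is controlled by $C$ and $\dim\M_m(E)$ --- using that the coordinate projections are completely contractive and that, on a finite-dimensional domain, the $\ell_1$-tails can be uniformly trimmed by a sliding-hump argument. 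Since the $E_n$ and $F_n$ are chosen so that no $\ell_1$-sum of a bounded number of the $F_n$'s can $k$-level-contain $E_k$ with a constant independent of $k$, this gives the contradiction. Establishing this localization for a \emph{sequence} of maps (rather than a single completely bounded one), starting from a lower bound that holds at only one matrix level, is the main obstacle, and is precisely the ``complete isomorphic theory of $\ell_1$-sums'' developed in these notes.

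For (3), I would build the required bijections $f^n\colon X\to Y$ coordinatewise out of the per-coordinate coarse equivalences between $E_k$ and $F_k$ (which exist trivially, both being finite-dimensional on the same underlying space); the only genuine issue is the \emph{equi}-coarse estimate for the amplifications $f^n_n\restriction_{n\cdot B_{\M_n(X)}}$ and $g^n_n\restriction_{n\cdot B_{\M_n(Y)}}$, and this is where the ball restriction together with the special metric geometry of the $\ell_1$-sum is used: an element of $n\cdot B_{\M_n(X)}$ has its coordinate masses at the $\M_1$-level controlled by $n$, while the $\M_n$-norm is sandwiched between the supremum and the sum of the coordinate norms, which lets one glue the coordinate maps with an overall distortion that stays bounded over all $n$ even though the coordinatewise $\cb$-distances blow up. Checking that the assembled maps and their inverses are equi-coarse embeddings on the relevant balls is then a careful but routine verification, and separability of $X$ and $Y$ is clear since each $E_n$ and $F_n$ is finite-dimensional.
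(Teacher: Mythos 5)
Your outline is a research program rather than a proof: both of the decisive items, (2) and (3), are left as acknowledged ``obstacles,'' no concrete choice of the summands $E_n,F_n$ is made, and the two key lemmas are neither stated precisely nor proved. The paper's construction is in fact structurally different from yours. It starts from a completely bounded Banach quotient $Q\colon \MAX(L_1)\to\MIN(\ell_2)$, renorms $Y=\MAX(L_1)$ by $\|y\|_{Y_m}=\max\{\|y\|,2^m\|Q(y)\|\}$, sets $\cZ(Q)=(\bigoplus_m Y_m)_{\ell_1}$, and takes the pair $\big(X\oplus\ker(\tilde Q),\,\cZ(Q)\big)$ with $X=\MIN(\ell_2)$; in particular the embedded space is \emph{not} an $\ell_1$-sum with coordinatewise-matched summands. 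Item (3) is then exactly Theorem \ref{ThmBraga021OpSp} (imported from \cite{Braga2021OpSp}), whose mechanism is that on a ball of radius $n$ the term $2^m\|Q(\cdot)\|$ forces elements of $Y_m$, for $m$ large, to be nearly in $\ker Q$ --- a phenomenon specific to this renorming, not a generic ``glue coordinatewise coarse equivalences'' fact. Item (2) reduces to Lemma \ref{Lemma1}: using strict singularity of operators $\ell_1\to\ell_2$ one finds an orthonormal sequence $(\xi_i)$ whose images are essentially supported on the $\ell_1(L_1)$ part, and then the failure of complete boundedness of the identity $\MIN(\ell_2)\to R+C$, a $1$-completely unconditional basic sequence argument in $L_1$, and \cite[Proposition 4.3]{Oikhberg2004} produce, for every $\gamma$, a single level $m=m(\gamma)$ at which \emph{every} $u$ with $\|u^{-1}\|\le 1$ has $\|u_m\|\ge\gamma$.

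Two concrete gaps in your plan. First, your item (3) is not ``routine,'' and there is an internal tension: an element of $n\cdot B_{\M_n(X)}$ supported in a single coordinate $k$ can have $\M_n(E_k)$-norm as large as $n$ (the $\M_1$-level $\ell_1$-mass bound does not tame the matrix-level behavior within a coordinate), and since $n\cdot B_{\M_n(E_k)}$ is convex and symmetric, an equi-coarse two-sided estimate for the amplification of a coordinatewise bijection there essentially forces a uniform norm-equivalence of $\M_n(E_k)$ and $\M_n(F_k)$ for all $n,k$; so the cb-distance blow-up would have to be invisible at level $n$ on coordinate $k$ for all $n$, which collides with your item (2), where the obstruction is supposed to be witnessed already at level $k$ on $E_k$ versus the $F_n$'s. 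You would at minimum need to exhibit a specific family resolving this tension, and none is given. Second, in item (2) your localization step, even granted, yields a set $S$ of summands whose cardinality is controlled by $\dim\M_k(E_k)$, which grows with $k$; hence the clinching claim that ``no $\ell_1$-sum of a \emph{bounded} number of the $F_n$'s can $k$-level-contain $E_k$'' is not the statement your argument would actually require, and the correct statement (with $|S|$ growing) is neither formulated nor established. As it stands, the proposal identifies where the difficulties lie but does not close either of them, whereas the paper closes them by a different route (the quotient renorming plus Lemma \ref{Lemma1}).
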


We point out that Theorem \ref{Thm.Main.Nonlinear} strengthens Theorem \ref{ThmBragaOpSpIsrael} not only since $X$ does not \emph{almost} completely isomorphically embed into $Y$, but also because the spaces given by Theorem \ref{Thm.Main.Nonlinear} are separable, which is not the case in Theorem \ref{ThmBragaOpSpIsrael}. 

We  briefly describe our approach to  Theorem \ref{Thm.Main.Nonlinear}. We prove this   theorem  by studying the complete isomorphic theory of $\ell_1$-sums of operator spaces.  Precisely, let $Q:\MAX(L_1)\to \MIN(\ell_2)$ be a completely bounded surjection given by the composition of surjections $L_1\to \ell_1$ and $\ell_1\to \ell_2$. This allow us to define operator spaces $(Y_i)_{i\in\N}$ such that the norm of each $Y_i$ is given by 
\[\|y\|_{Y_i}=\max\{\|y\|,2^i\|Q(y)\|\}\]
(see Section \ref{Secl1sum} for details). Our main technical result of Section \ref{Secl1sum} then shows that $\MIN(\ell_2)$ cannot almost completey isomorphicaly embed into $(\bigoplus_iY_i)_{c_0}$. This proof is quite technical and it is done in  Lemma \ref{Lemma1} below. Together with \cite[Theorem 4.3]{Braga2021OpSp} (restated below as Theorem \ref{ThmBraga021OpSp}), this will allow us to obtain Theorem \ref{Thm.Main.Nonlinear}.
 
\subsection{Embeddings into $c_0$-sums}
In the second part of this paper, we leave the nonlinear theory aside and  move to study the complete isomorphic embeddability of certain operator spaces  into certain $c_0$-sums $(\bigoplus_n Y_n)_{c_0}$. Precisely, Section \ref{Secc0sum} deals with operator space versions (and counterexamples) of the following classic result from Banach theory, whose prove follows from a standard gliding hump argument (cf. \cite[Proposition 2.c.4]{LT1} or Proposition \ref{compact operator into} below). 


\begin{proposition}[Folklore]
Suppose $X$ and $(Y_i)_{i\in\N}$ are Banach spaces and assume that  $X$ is infinite dimensional and does not contain an isomorphic copy of $c_0$. If $X$ isomorphically embeds into $(\bigoplus_i Y_i)_{c_0}$, then some infinite dimensional subspace of $X$ isomorphically embeds into some $Y_i$.\label{no embedding}
\end{proposition}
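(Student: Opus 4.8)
The plan is to argue by a dichotomy, the heart of which is a gliding-hump construction. Fix an isomorphic embedding $T\colon X\to Z:=(\bigoplus_i Y_i)_{c_0}$, say $a\|x\|\le \|Tx\|\le b\|x\|$ for all $x\in X$, and for a finite set $F\subseteq\N$ write $P_F\colon Z\to Z$ for the canonical coordinate projection onto $(\bigoplus_{i\in F}Y_i)$ (of norm $\le 1$); abbreviate $P_{\le n}:=P_{\{1,\dots,n\}}$, $P_{>n}:=I-P_{\le n}$, $P_{[m,n]}:=P_{\{m,\dots,n\}}$, and set $R_n:=P_{\le n}T\colon X\to(\bigoplus_{i\le n}Y_i)_{\infty}$. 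The elementary fact I would use repeatedly is that $\|P_{>n}Tx\|\to 0$ as $n\to\infty$ for each fixed $x\in X$, because $Tx$ lies in the $c_0$-sum.

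First I would handle the case in which some $R_n$ is bounded below on an infinite-dimensional subspace $W\subseteq X$. In that case $P_{\le n}T|_W$ is an isomorphic embedding of the infinite-dimensional space $W$ into the \emph{finite} $\ell_\infty$-sum $(\bigoplus_{i\le n}Y_i)_\infty$. Writing $\pi_i$ for the $i$-th coordinate projection and $j_i$ for the $i$-th coordinate inclusion, this embedding equals $\sum_{i\le n}j_i\circ(\pi_iT|_W)$. Since the strictly singular operators form an operator ideal --- in particular a linear subspace of the bounded operators stable under left and right composition --- if each $\pi_iT|_W$ were strictly singular then so would be their sum, contradicting the fact that an isomorphic embedding of an infinite-dimensional space is not strictly singular. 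Hence $\pi_{i_0}T$ is bounded below on some infinite-dimensional subspace $W'\subseteq W$ for some $i_0\le n$, so $W'$ embeds into $Y_{i_0}$, which is the desired conclusion.

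It then remains to rule out, under the standing hypotheses, the complementary case: $R_n$ is not bounded below on $X$ for every $n$. Here I would construct recursively a normalized sequence $(x_k)_k$ in $X$ and integers $0=m_0<m_1<m_2<\cdots$ such that, for a prescribed sequence $(\eps_k)_k$ of positive reals with $\sum_k\eps_k<a/2$, one has
\[\|P_{\le m_{k-1}}Tx_k\|<\eps_k\qquad\text{and}\qquad\|P_{>m_k}Tx_k\|<\eps_k .\]
The first inequality is arranged by choosing $x_k$ among the normalized vectors with arbitrarily small $R_{m_{k-1}}$-image, which exist precisely because $R_{m_{k-1}}$ is not bounded below; the second is then arranged by taking $m_k$ sufficiently large, using the elementary fact above. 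Setting $u_k:=P_{[m_{k-1}+1,\,m_k]}Tx_k$, the $u_k$ are disjointly supported in $Z$, satisfy $a/2\le a-\eps_k\le\|u_k\|\le b$, and $\|Tx_k-u_k\|<\eps_k$. Disjointly supported vectors of a $c_0$-sum obey $\|\sum_k c_ku_k\|=\max_k|c_k|\,\|u_k\|$, so a routine perturbation estimate (using $\sum_k\eps_k<a/2$) gives two-sided bounds $c_1\max_k|c_k|\le\|\sum_k c_kTx_k\|\le c_2\max_k|c_k|$; applying $T^{-1}$, the sequence $(x_k)_k$ is equivalent to the unit vector basis of $c_0$, so $c_0$ embeds into $X$, contrary to hypothesis. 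This contradiction leaves only the first case, which delivers the conclusion.

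The only delicate point is the bookkeeping of the gliding hump: one must ensure the blocks $[m_{k-1}+1,m_k]$ are consecutive and pairwise disjoint and that the accumulated errors stay below $a/2$, so that the closing perturbation argument produces a genuine two-sided estimate (hence a copy of $c_0$) rather than merely an upper bound. Everything else --- using the ideal property of strictly singular operators to collapse a finite $\ell_\infty$-sum onto a single summand, and the norm formula for disjointly supported elements of a $c_0$-sum --- is standard.
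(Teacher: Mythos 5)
Your proof is correct and takes essentially the same route as the paper, which only sketches this folklore fact: a gliding hump producing a normalized sequence whose images are almost disjointly supported in the $c_0$-sum (hence equivalent to the $c_0$ basis, contradicting $c_0\not\hookrightarrow X$), together with the ideal property of strictly singular operators to pass from a finite block compression to a single summand $Y_{i_0}$. The only blemish is trivial bookkeeping --- the perturbation bound should be $\|Tx_k-u_k\|<2\eps_k$ rather than $\eps_k$, so one needs $\sum_k\eps_k$ small relative to $a$ accordingly --- which you already flag and which is harmlessly fixed by prescribing the $\eps_k$ smaller.
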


We show that \Cref{no embedding} 
has an operator space version for  homogeneous Hilbertian operator spaces as long as we restrict ourselves to $c_0$-sums of a single operator  space.\footnote{Recall, an operator space $X$ is \emph{Hilbertian} if it is isomorphic (as a Banach space) to $\ell_2$. Also, given $\lambda\geq 1$, $X$ is \emph{$\lambda$-homogeneous} if $\|u\|_{\cb}\leq \lambda\|u\|$ for all operators $u\colon X\to X$. We then say $X$ is \emph{homogeneous} if it is $\lambda$-homogeneous for some $\lambda\geq 1$.} Precisely:

\begin{theorem}\label{Thm.HomHilb.SingleSpace}
Let $X$ and $Y$ be  homogeneous Hilbertian spaces and assume that $X$ has infinite dimension. If $X$  completely isomorphically embeds into $c_0(Y)$, then $X$ completely isomorphically embeds  into $Y$. In particular, if $X$ and $Y$ have the same density character, then $X$ and $Y$ are completely isomorphic.
\end{theorem}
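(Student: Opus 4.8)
The plan is to adapt the classical gliding-hump argument of Proposition~\ref{no embedding} to the operator space category, using homogeneity to upgrade the linear constructions to completely bounded ones. First I would fix a complete isomorphic embedding $u\colon X\to c_0(Y)$ with completely bounded inverse on its image, and write $u=(u_k)_k$ where each $u_k\colon X\to Y$ is a completely bounded linear map with $\sup_k\|u_k\|_{\cb}<\infty$ and $\|u_k x\|_Y\to 0$ for each $x\in X$. Because $X$ is Hilbertian and infinite dimensional, we may choose an orthonormal-type basic sequence $(e_j)_j$ in $X$; since $X$ is homogeneous, for every finite or cofinite subset $S$ of $\N$ the natural coordinate projection $P_S\colon X\to X$ onto $\overline{\spn}\{e_j : j\in S\}$ is completely bounded with $\|P_S\|_{\cb}\le \lambda\|P_S\|\le \lambda$ (uniformly), which is the key structural feature that the general Banach space argument lacks control of.

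Next I would run the gliding hump: using that $\|u_k x\|_Y\to 0$ pointwise together with a diagonal/compactness argument on finite-dimensional blocks, I extract a block subsequence $(x_m)_m$ of $(e_j)_j$ (so $x_m\in \spn\{e_j : j\in I_m\}$ with $I_1<I_2<\cdots$) and an increasing sequence of indices $(k_m)_m$ such that $\|u_k x_m\|_Y$ is small for $k<k_{m}$ and large $k\ge k_{m+1}$, making the ``diagonal'' maps $x_m\mapsto u_{k_m} x_m$ behave like a complete isomorphic embedding of $Z:=\overline{\spn}\{x_m : m\in\N\}$ into $Y$ after a perturbation. Homogeneity of $X$ guarantees $Z$ is completely isomorphic to $X$ (any bijective linear map from $X$ onto $Z$ is automatically a complete isomorphism, up to the constant $\lambda$), so it suffices to embed $Z$. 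The perturbation step needs the blocks to be well-separated in the $c_0$-direction: I would arrange that the ``tails'' $\sum_{k\ge k_{m+1}}$ and ``heads'' $\sum_{k< k_m}$ contribute only a small (summable) amount in $\cb$-norm on the span of $\{x_1,\dots,x_{m}\}$, which one can do because each $u_k$ is $\cb$ and on the finite-dimensional space $\spn\{x_1,\dots,x_m\}$ the $\cb$-norm and the operator norm are equivalent with constants depending only on the (controlled) dimension via homogeneity—more precisely, using that $P_S$ is uniformly $\cb$ one never needs to pay a dimension-dependent price.

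Once $Z$ (hence $X$) completely isomorphically embeds into a single copy of $Y$, the ``in particular'' clause follows from a Pełczyński-style decomposition: $X$ and $Y$ are both homogeneous Hilbertian, so $X\cong X\oplus_2 X\oplus_2\cdots$ completely, and similarly for $Y$; combined with $X\hookrightarrow Y$ and $Y\hookrightarrow X$ (the latter because $Y$ is a Hilbertian homogeneous space of density character at most that of $X$ and trivially embeds, or one applies the same argument with roles reversed once both have the same density character and infinite dimension) one gets $X$ and $Y$ completely isomorphic by the standard Schroeder--Bernstein argument for operator spaces, which goes through because all the summing is done in the $\ell_2$/Hilbertian direction where homogeneity keeps the $\cb$-constants uniform.

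I expect the main obstacle to be the perturbation/stability step: in the Banach setting small-norm perturbations of an embedding are still embeddings, but here I need the perturbation to be small in $\cb$-norm on \emph{all} the finite-dimensional blocks simultaneously with uniform constants, and this is exactly where I must exploit that $X$ (and $Y$) are homogeneous so that coordinate projections and block maps are uniformly completely bounded; without that one cannot control $\|\cdot\|_{\cb}$ on growing-dimensional subspaces and the gliding hump collapses to the purely Banach conclusion. A secondary technical point is choosing the basic sequence $(e_j)$ and the blocks so that both $u$ and $u^{-1}$ restrict well—this is routine once the homogeneity-driven uniform bounds are in place.
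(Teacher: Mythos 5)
Your plan has a genuine gap at its central step: the ``diagonal'' map $x_m\mapsto u_{k_m}x_m$ does not give an embedding of $Z=\overline{\spn}\{x_m\}$ into a \emph{single} copy of $Y$, not even at the Banach level. The gliding hump arranges that $u(x_m)$ is essentially supported on disjoint blocks of \emph{coordinates of the $c_0$-sum}; it is this disjointness in $c_0(Y)$ that produces $\sup$-type behaviour, and in the classical argument it is used only to reach a contradiction ($c_0$ inside $X$), never to build an embedding into one summand. Once you send the different blocks through different coordinates $u_{k_m}$ into the same Hilbert space $Y$, the images $u_{k_m}x_m$ can overlap or cancel arbitrarily (nothing prevents $u_{k_1}x_1=u_{k_2}x_2$), so the linear extension of your diagonal map need not be bounded below, and no perturbation estimate or uniform $\cb$-control of coordinate projections repairs this, because the failure is already isomorphic rather than completely isomorphic. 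What the hump can give you is only the dichotomy: either some single coordinate $u_i$ is an isomorphism on some finite-codimensional subspace, or every $u_i$ is ``singular'' enough that the hump runs and $X\supseteq c_0$, a contradiction. And even in the favourable branch you are left with a \emph{Banach} isomorphic embedding of a subspace into $Y$, which is still far from the required \emph{complete} isomorphic embedding of $X$; homogeneity of $X$ identifies $Z$ with $X$ completely, but it does not upgrade $u_i\restriction_Z$ to a complete embedding.

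The paper's proof supplies exactly the missing upgrade. After identifying $X=\ell_2(\Gamma)\subseteq\ell_2(\Lambda)=Y$ as vector spaces, it splits on whether the formal inclusion $X\hookrightarrow Y$ is completely bounded. If it is, the key point (your proposal has no analogue of it) is that it must then be completely bounded below: otherwise one picks matrices with $\|\sum_\gamma a_\gamma\otimes e_\gamma\|_{\M_k(Y)}=1$ and $\|\sum_\gamma a_\gamma\otimes e_\gamma\|_{\M_k(X)}$ huge, uses the $\theta$-homogeneity of $Y$ to bound $\sup_i\|v_i\|_{\cb}\leq\theta L\|u\|$ for the coordinate maps $v_i$ viewed on $Y$, and contradicts the finiteness of $\|u^{-1}\|_{\cb}$; so $X$ embeds completely into $Y$ via the formal identity. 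If the inclusion is not completely bounded, homogeneity of both spaces shows no $u_i$ can be an isomorphism on any finite-codimensional subspace (else composing with homogeneity-produced complete isomorphisms would make the inclusion c.b.), and only then does the gliding hump appear, yielding $c_0\hookrightarrow\ell_2$ and a contradiction. In short: the hump is the easy, contradictory branch; the real content is the Case 1 argument that converts complete boundedness of the identity $X\to Y$ plus homogeneity of $Y$ and $\|u^{-1}\|_{\cb}<\infty$ into a complete lower bound, and that step is absent from your proposal. (For the ``in particular'' clause, a Pe\l czy\'nski-type decomposition is also unnecessary: once the formal identity is a complete embedding, homogeneity of $Y$ identifies its range with $Y$ when the density characters agree.)
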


The restriction in Theorem \ref{Thm.HomHilb.SingleSpace} of only considering $c_0$-sums of a single operator space is not superfluous. Precisely, we show the following: 

\begin{theorem}\label{Thm.Prop.HomHilb.MultipleSpace}
Let $(Y_i)_{i\in\N}$ be operator spaces all of which are completely isomorphic to $\MIN(\ell_2)$. There exists a separable homogeneous Hilbertian operator space $X$ such that $X$ completely isomorphically embeds into $(\bigoplus_nY_n)_{c_0}$, but  $X$ is  not completely isomorphic to $\MIN(\ell_2)$.
\end{theorem}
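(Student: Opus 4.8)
The plan is to produce $X$ as a homogeneous Hilbertian operator space whose $N$-dimensional matrix levels spread over a window of summands of the $c_0$-sum that drifts to infinity as $N$ grows; this keeps every finite-dimensional piece of $X$ uniformly completely isomorphic to a subspace of $(\bigoplus_n Y_n)_{c_0}$, while allowing $X$ as a whole to sit at infinite complete Banach--Mazur distance from $\MIN(\ell_2)$. The organizing fact is that, for a homogeneous Hilbertian $X$ with orthonormal basis $(e_k)_k$, homogeneity makes $\delta_N:=d_{\cb}(\spn(e_1,\dots,e_N),\MIN(\ell_2^N))$ well defined and non-decreasing, and an ultraproduct (or weak${}^{*}$-limit) of near-optimal maps $\spn(e_1,\dots,e_N)\to\MIN(\ell_2^N)$ shows that $\sup_N\delta_N<\infty$ already forces a complete isomorphic embedding $X\hookrightarrow\MIN(\ell_2)$ and hence, subspaces of minimal operator spaces being minimal, $X\cong_{\cb}\MIN(\ell_2)$. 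So it suffices to build a homogeneous Hilbertian $X$ with $\delta_N\to\infty$ that embeds completely isomorphically into the $c_0$-sum. I would first note that this is only possible when $\sup_i d_{\cb}(Y_i,\MIN(\ell_2))=\infty$: if that supremum is finite then $(\bigoplus_n Y_n)_{c_0}\cong_{\cb}c_0(\MIN(\ell_2))$ and Theorem~\ref{Thm.HomHilb.SingleSpace} rules out any such $X$. So the relevant situation --- which one may arrange by a suitable choice of the $Y_i$, e.g.\ the Hilbertian spaces $Y_i=C_{m_i}\oplus_\infty\MIN(\ell_2)$ with $m_i\to\infty$, which are completely isomorphic to $\MIN(\ell_2)$ with $d_{\cb}(Y_i,\MIN(\ell_2))\asymp\sqrt{m_i}$ --- is that the distortions blow up, so that the $c_0$-sum carries column-type structure at every finite scale.

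With such targets fixed, I would construct $X$ and the embedding simultaneously. Concretely, one looks for normalized vectors $x_1,x_2,\dots\in(\bigoplus_n Y_n)_{c_0}$, each of the form $x_m=(\beta_{n,m}\,\iota_n(e_m))_n$ for appropriate completely contractive maps $\iota_n\colon\ell_2\to Y_n$ and weights $\beta_{n,m}\to 0$ in $n$, chosen so that (i) $X:=\overline{\spn}\{x_m:m\in\N\}$ is isometric to $\ell_2$ with $(x_m)$ an orthonormal basis, (ii) the operator space structure on $X$ inherited from the $c_0$-sum is invariant under the unitary group of $\ell_2$ acting on $(x_m)$, i.e.\ $X$ is homogeneous, and (iii) $\delta_N\to\infty$, so $X\not\cong_{\cb}\MIN(\ell_2)$. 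The weights must be tuned so that at matrix level $N$ the inherited norm on $\M_N(X)$ is, up to an absolute constant independent of $N$, controlled by the $N$-th amplifications of the summands $Y_n$ for $n$ in a window $[n_0(N),n_1(N)]$ that drifts to infinity with $N$ --- this is where the growing distortion of $X$ from $\MIN$ comes from --- while each fixed $x_m$ still decays in norm along the $c_0$-sum. Once the right vectors and weights are in place, the completely bounded iso-embedding $X\hookrightarrow(\bigoplus_n Y_n)_{c_0}$ is simply the inclusion.

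The step I expect to be the crux is precisely this tuning, because there is a genuine tension between the decay forced on each coordinate by membership in the $c_0$-sum and the need for $X$'s matrix norms to be reproduced faithfully by the (shifting, but at each level finite) block of summands that governs them; a column-type structure, left to itself, ``wants'' to persist across all summands rather than decay. Closing this gap requires coordinating the choice of the target spaces $Y_n$, the maps $\iota_n$, and the weights $\beta_{n,m}$ so that the two competing asymptotics --- ``how many summands the $N$-th matrix level sees'' and ``how fast the coordinates of a fixed $x_m$ decay'' --- are balanced, and so that homogeneity of $X$ survives the construction. I expect this, rather than the more routine ingredients (the homogeneity-versus-distance dichotomy for $X$, the complete Banach--Mazur estimate for $C_m\oplus_\infty\MIN(\ell_2)$, and the use of Theorem~\ref{Thm.HomHilb.SingleSpace}), to be where the real work lies, in the spirit of the delicate estimates carried out in Lemma~\ref{Lemma1}.
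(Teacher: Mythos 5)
Your proposal is a plan rather than a proof: you identify the correct general shape (choose the $Y_i$ with $\sup_i d_{\cb}(Y_i,\MIN(\ell_2))=\infty$, then realize $X$ as a weighted diagonal copy of $\ell_2$ inside the $c_0$-sum), but you explicitly defer the crux --- the actual choice of targets, maps and weights, and the verification that $X$ is homogeneous, lands in the $c_0$-sum, and has unbounded distortion from $\MIN(\ell_2)$ --- calling it ``where the real work lies.'' That missing step is the entire content of the theorem, so as it stands there is a genuine gap. Moreover, your concrete suggestion $Y_i=C_{m_i}\oplus_\infty\MIN(\ell_2)$ with coordinate truncations runs headlong into the very difficulty you flag: a fixed finite-rank truncation is not unitarily invariant, so the induced matrix norms on $X$ would not be invariant under the unitary group of $\ell_2$, and homogeneity would be lost unless you modify the construction.

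The paper closes exactly this gap with a short explicit construction, and the ``tension'' you anticipate turns out to be much milder than you fear. One sets $Y_n=R^{[2^n]}$, where $R^{[m]}$ is $\ell_2$ with $\|x\|_{B(\ell_2)\otimes R^{[m]}}=\sup_P\|(I\otimes P)x\|_{B(\ell_2)\otimes R}$, the supremum over all rank-$m$ projections; taking the sup over \emph{all} such projections is precisely the device that makes each $Y_n$ $1$-homogeneous (and completely isomorphic, non-uniformly, to $\MIN(\ell_2)$), so homogeneity of $X$ is automatic rather than something to be protected. Then $X=u(\ell_2)$ with $u(\xi)=(\xi/n)_n$: membership in the $c_0$-sum only requires decay of the Banach-level norms, and since each $Y_n$ is isometrically $\ell_2$ at level one, the harmonic weights $1/n$ suffice and $u$ is even an isometry; meanwhile at matrix level, testing against $x_n=[e_1\ \cdots\ e_{2^n}]\in\M_{1,2^n}$ gives $\|x_n\|_{\M_{1,2^n}(\MIN(\ell_2))}=1$ but $\|x_n\|_{\M_{1,2^n}(X)}\geq 2^{n/2}/n\to\infty$, and by homogeneity/minimality of $\MIN(\ell_2)$ this rules out any complete isomorphism (your $\delta_N$ dichotomy is essentially this reduction). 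So no window-tuning or delicate balancing in the spirit of Lemma \ref{Lemma1} is needed; the polynomial decay of the weights against the exponential growth of the truncation ranks does all the work.
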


Notice that, since the operator space $X$ in Theorem \ref{Thm.Prop.HomHilb.MultipleSpace} is homogeneous and Hilbertian, the conclusion of this theorem implies that no infinite subspace of $X$ completely isomorphically embeds into any of the $Y_i$'s. Hence,  Theorem \ref{Thm.Prop.HomHilb.MultipleSpace} does indeed show that the operator space version of Proposition \ref{no embedding} does not hold in general.

Although Theorem \ref{Thm.Prop.HomHilb.MultipleSpace} says that we cannot generalize Theorem \ref{Thm.HomHilb.SingleSpace} to arbitrary $c_0$-sums, we show that this can be done at least for some specific operator spaces $X$. In the next theorem, $R$ and $C$ denote the \emph{row} and \emph{column operator spaces}, respectively, and  $R\cap C$ their \emph{intersection operator space} (see Section \ref{SectionPrelim} for precise definitions).

\begin{theorem}\label{Thm.fix problem}
Let $(Y_i)_{i\in\N}$ be operator spaces all of which are completely isomorphic to $\MIN(\ell_2)$.  If  $X\in \{R,C,R\cap C, \MAX(\ell_2)\}$, then  $X$ does not completely isomorphic embeds into $(\bigoplus_iY_i)_{c_0}$.
 \end{theorem}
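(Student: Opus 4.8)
The plan is to combine the folklore Proposition~\ref{no embedding} (in its operator space refinement, which holds for the spaces in question) with the fact that $R$, $C$, $R\cap C$, and $\MAX(\ell_2)$ are all ``far'' from $\MIN(\ell_2)$ at the completely bounded level, and that this distance is witnessed already by finite-dimensional sections in a way that rules out a gliding-hump argument. First I would recall that none of $R$, $C$, $R\cap C$, $\MAX(\ell_2)$ is completely isomorphic to $\MIN(\ell_2)$; in fact, since all four spaces $X$ under consideration are homogeneous and Hilbertian, a complete isomorphic embedding of $X$ into $(\bigoplus_i Y_i)_{c_0}$ would, by the operator space version of the gliding-hump argument (Proposition~\ref{compact operator into} / Proposition~\ref{no embedding}), produce a complete isomorphic embedding of some infinite-dimensional subspace of $X$ into some $Y_i\cong_{\mathrm{cb}}\MIN(\ell_2)$. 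But an infinite-dimensional subspace of a homogeneous Hilbertian space is completely isomorphic to the whole space, so $X$ would completely isomorphically embed into $\MIN(\ell_2)$; since $\MIN(\ell_2)$ is homogeneous, this embedding would be onto a subspace completely isomorphic to $\MIN(\ell_2)$, hence $X$ itself would be completely isomorphic to $\MIN(\ell_2)$, a contradiction.

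The work therefore reduces to two points: (a) that the gliding-hump argument (Proposition~\ref{no embedding}) applies, i.e.\ that $X\in\{R,C,R\cap C,\MAX(\ell_2)\}$ does not contain $c_0$ — which is immediate since each such $X$ is Hilbertian, hence reflexive, hence $c_0$-free; and (b) that $X$ is not completely isomorphic to $\MIN(\ell_2)$. For (b) I would argue space by space using standard operator space invariants: $\MIN(\ell_2)$ is a minimal operator space, so every operator space that is completely isomorphic to it must itself be completely isomorphic to a minimal operator space; but $R$ and $C$ are not, since e.g.\ the identity on $R$ (resp.\ $C$) regarded through a minimal quotient is not a complete isomorphism — concretely, $\mathrm{id}\colon \MIN(\ell_2)\to R$ is bounded but not completely bounded, and similarly for $C$, while for $R\cap C$ one uses that $R\cap C$ completely contains $R$ (and $C$) and invoke the same obstruction, and for $\MAX(\ell_2)$ one uses that $\mathrm{id}\colon \MAX(\ell_2)\to\MIN(\ell_2)$ is a complete contraction but $\mathrm{id}\colon\MIN(\ell_2)\to\MAX(\ell_2)$ is not completely bounded (indeed $\MAX$ and $\MIN$ of an infinite-dimensional space are never completely isomorphic). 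Thus none of the four candidates is completely isomorphic to $\MIN(\ell_2)$.

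Putting these together completes the argument: if $X$ completely isomorphically embedded into $(\bigoplus_i Y_i)_{c_0}$, then by step (a) and Proposition~\ref{no embedding} an infinite-dimensional subspace $Z\subseteq X$ completely isomorphically embeds into some $Y_i$; homogeneity of $X$ gives $Z\cong_{\mathrm{cb}}X$, and $Y_i\cong_{\mathrm{cb}}\MIN(\ell_2)$ with $\MIN(\ell_2)$ homogeneous gives that the image is completely isomorphic to $\MIN(\ell_2)$, whence $X\cong_{\mathrm{cb}}\MIN(\ell_2)$, contradicting step (b). The main obstacle I anticipate is making precise the operator space version of Proposition~\ref{no embedding} for the relevant class of spaces — one must check that the gliding-hump/perturbation argument can be run at the completely bounded level (controlling $\mathrm{cb}$-norms of the finite-rank perturbations and of the block-diagonal projections on $c_0$-sums, which are complete contractions), rather than merely at the Banach level; once that is in place, the rest is a matter of assembling known facts about $R$, $C$, $R\cap C$, $\MIN$, and $\MAX$.
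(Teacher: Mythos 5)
Your argument has a genuine gap, and it sits exactly where you flagged your ``main obstacle'': the operator space version of Proposition \ref{no embedding} that you invoke does not exist in the generality you need, and in fact it is refuted inside this very paper. Your key step asserts that a complete isomorphic embedding of $X$ into $(\bigoplus_i Y_i)_{c_0}$ yields an infinite-dimensional subspace of $X$ that completely isomorphically embeds into some $Y_i$. Proposition \ref{compact operator into} does not give this: its hypothesis is that every completely bounded map $X\to Y_i$ is strictly singular \emph{at the Banach level}, and this fails badly here, because $\MIN(\ell_2)$ is a minimal operator space, so the formal identity from each of $R$, $C$, $R\cap C$, $\MAX(\ell_2)$ (all of which are $\ell_2$ as Banach spaces) into $\MIN(\ell_2)$ is bounded, hence completely bounded, and is a Banach-space isomorphism --- the gliding hump therefore cannot be started. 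Worse, the ``true'' operator space statement you want (with completely strictly singular maps) is false: Theorem \ref{Thm.Prop.HomHilb.MultipleSpace} exhibits a homogeneous Hilbertian $X$ that completely isomorphically embeds into $(\bigoplus_n Y_n)_{c_0}$ with every $Y_n$ completely isomorphic to $\MIN(\ell_2)$, yet $X$ is not completely isomorphic to $\MIN(\ell_2)$. Your proposed argument uses only that $X$ is homogeneous, Hilbertian, $c_0$-free, and not completely isomorphic to $\MIN(\ell_2)$, so if it were valid it would contradict that theorem; the conclusion of Theorem \ref{Thm.fix problem} cannot be reached from those properties alone. (Your steps (a) and (b) are fine, but they are the easy part.)

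What actually makes the four specific spaces work is a dual, quotient-based mechanism, which is how the paper proceeds: each of $R$, $C$, $R\cap C$, $\MAX(\ell_2)$ admits a completely bounded surjection onto $X_0\in\{R,C\}$ (the identity, or the formal identity $R\cap C\to R$, or $\MAX(\ell_2)\to R$), and $R$, $C$ are injective, so a complete embedding of $X$ into $(\bigoplus_i Y_i)_{c_0}$ would produce a c.b.\ surjection from the whole $c_0$-sum onto $X_0$ (Corollary \ref{c:row_col_doesnt_embed}). This is excluded by Proposition \ref{no quotient cosingular}, because every completely bounded map $\MIN(\ell_2)\to X_0$ is compact (indeed $2$-summing), hence strictly cosingular, and $R$, $C$ contain no copy of $\MIN(c_0)$. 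So the fix is not to strengthen the gliding-hump embedding argument --- that route is blocked --- but to replace strict singularity of maps $X\to Y_i$ by strict cosingularity of maps $Y_i\to X_0$ together with injectivity of $X_0$.
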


 Finally, we observe that, if, in the setting of Theorem \ref{Thm.Prop.HomHilb.MultipleSpace}, the homogeneity of $X$ is not assumed, then we can embed an ``unexpected'' space $X$ not just into a $c_0$-sum of $Y_i$'s, but also into the simpler space $c_0(Y)$:
 
 \begin{theorem}\label{Thm.Counterexample} 
There are separable Hilbertian operator spaces $X$ and $Y$ such that $X$ completely isometrically embeds into $c_0(Y)$, but   $X$ does not completely isomorphically embed  into $Y$. 
 \end{theorem}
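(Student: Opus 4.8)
\textbf{Proof proposal for Theorem \ref{Thm.Counterexample}.}

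The plan is to realize the ``unexpected'' space $X$ as an $\ell_2$-direct sum of finite-dimensional operator spaces that are ``spread out'' along the copies of $Y$ in $c_0(Y)$, where $Y$ itself is chosen to be a homogeneous Hilbertian space rich enough that many nonisomorphic finite-dimensional Hilbertian operator spaces embed almost completely isometrically into it. A natural candidate is to take $Y$ to be (completely isometric to) a suitable $c_0$- or $\ell_2$-type amalgam, e.g.\ $Y = \big(\bigoplus_n E_n\big)_{c_0}$ or $Y=\OH$-like, for which one has for every $n$ an $n$-dimensional Hilbertian operator space $F_n$ embedding $(1+\eps_n)$-completely isomorphically into $Y$ with $\eps_n\to 0$, while the sequence $(F_n)_n$ has the property that $\big(\bigoplus_n F_n\big)_{\ell_2}$ (with its natural operator space structure) is \emph{not} completely isomorphic to any subspace of $Y$. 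First I would fix such a family: for instance one can take the $F_n$ to interpolate between $R$ and $C$ with parameters $\theta_n$ chosen dense in $[0,1]$, using that $(R,C)_\theta$'s form an uncountable incomparable family (as recalled in the introduction via \cite[Theorem 1.3]{Braga2021OpSp}); their $\ell_2$-sum then cannot sit inside a single $Y$ that ``only sees countably many types'' in a controlled way.

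Next I would carry out the embedding $X \hookrightarrow c_0(Y)$. Writing $X=\big(\bigoplus_n F_n\big)_{\ell_2}$, define $T\colon X\to c_0(Y)$ by sending the $F_n$-block to the $n$-th coordinate of $c_0(Y)$ via a fixed complete isometry $F_n\hookrightarrow Y$. The point is that on each matrix level $\M_k$, a vector of $\M_k(X)$ decomposes along the blocks and the $c_0$-norm picks out the largest block-norm; since each block map is a complete isometry into $Y$, the map $T$ is completely isometric provided the operator space structure on $X$ is defined precisely as the one making $X\hookrightarrow c_0\big(\bigoplus_n F_n\big)$ (diagonal embedding) completely isometric — in other words, one should \emph{define} $X$ by this very $c_0$-type embedding rather than by the $\ell_2$-sum structure, and then separately check $X$ is Hilbertian as a Banach space (which holds because on the Banach level a $c_0$-sum of finite-dimensional Hilbert spaces with norms tending to be realized is still isomorphic to $\ell_2$ once one passes to the closed span of a chosen unit-vector basis — here a gliding-hump/small-perturbation argument as in Proposition \ref{compact operator into} is needed to see the Banach-space copy of $\ell_2$).

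The final and main step is the negative part: $X$ does not completely isomorphically embed into $Y$. Suppose $S\colon X\to Y$ were a complete isomorphic embedding. Since $X$ contains, completely isometrically, every $F_n$, the space $Y$ would contain each $F_n$ uniformly completely isomorphically; but more is true — it would contain their $\ell_2$-sum structure, i.e.\ $S$ restricted to $\big(\bigoplus_n F_n\big)$ realizes a complete embedding of a space whose ``local operator-space spectrum'' is all of $\{(R,C)_{\theta_n}\}_n$ simultaneously. I would then derive a contradiction by a homogeneity/interpolation argument: using that $Y$ is $\lambda$-homogeneous Hilbertian, project (completely boundedly, via the homogeneity) onto the part of $S(X)$ carrying the $F_n$-structure and conclude $F_n$ embeds $C$-completely isomorphically into $Y$ with a constant \emph{independent of $n$} and, crucially, compatibly across $n$; this forces $Y$ to contain a single subspace completely isomorphic to $(R,C)_\theta$ for a \emph{limit} parameter $\theta$, and then the incomparability of the interpolation family (or a direct computation with the characteristic $\cb$-norms of identity-type maps on $R\cap C$-type spaces, as in Theorem \ref{Thm.fix problem}) yields the contradiction. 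The hard part will be exactly this last implication: turning a hypothetical complete embedding of the \emph{sum} $X$ into simultaneous, uniformly bounded, mutually compatible complete embeddings of the blocks $F_n$ into the single space $Y$, and then extracting from that a forbidden single-parameter interpolation subspace — this requires a careful stability analysis of how $\cb$-norms of finite-rank maps behave under restriction and under the homogeneity hypothesis, and it is where the bulk of the technical work (an operator-space gliding hump, plus quantitative interpolation estimates) will go.
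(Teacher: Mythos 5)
Your construction runs into a genuine obstruction already at the Banach-space level. If you define $X$ as the closed span of the blocks $F_n$ placed diagonally into distinct coordinates of $c_0(Y)$, then the first matrix level of that embedding gives the norm $\max_n\|x_n\|$, so $X$ is isomorphic (indeed, the span of one unit vector per block is isometric) to $c_0$, not to $\ell_2$; your claim that a gliding-hump argument recovers a Banach copy of $\ell_2$ is false, and so this $X$ is not Hilbertian, as the theorem requires. Conversely, if you insist that $X$ carry the $\ell_2$-sum norm, the diagonal map into $c_0(Y)$ is not an isometric embedding even at level one. The same hypothesis is violated by your candidate $Y=(\bigoplus_n E_n)_{c_0}$, which contains $c_0$ and hence is not Hilbertian either (and if instead $Y$ is $\OH$-like and homogeneous, you are fighting against Theorem \ref{Thm.HomHilb.SingleSpace}-type rigidity). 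The paper resolves exactly this tension with a different device: $Y=\MIN(\ell_2)\oplus\MAX(\ell_2)$, and $X$ is the Banach space $\ell_2$ with operator space structure given by $x\mapsto (x,(x\restriction_{S_k})_k)\in \MIN(\ell_2)\oplus(\bigoplus_k\MAX(\ell_2(S_k)))_{c_0}$; the global $\MIN(\ell_2)$ coordinate forces the first-level norm to be the $\ell_2$ norm (so $X$ is Hilbertian), while the $c_0$-sum of $\MAX$ blocks only distorts the higher matrix levels.

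The negative part of your argument is also not a proof but a program, and its key step is doubtful. From a hypothetical complete embedding $S\colon X\to Y$ you want to extract ``compatible'' uniform embeddings of the finite blocks $F_n=(R,C)^{k_n}_{\theta_n}$ and then produce a single infinite-dimensional $(R,C)_\theta$ inside $Y$ for a limit parameter $\theta$; nothing in the cited incomparability results (which concern the infinite-dimensional interpolation scale) yields this, and uniform containment of finite-dimensional pieces does not in general assemble into containment of their limit, so the ``bulk of the technical work'' you defer is precisely the missing proof. Compare with the paper's quantitative argument: writing $u=(p\circ u, q\circ u)$ into $\MIN(\ell_2)\oplus\MAX(\ell_2)$, one shows the $1\times k$ amplifications of $p\circ u$ restricted to the $\MAX(\ell_2(S_k))$-blocks cannot be equi-isomorphisms (else the identities $\M_{1,k}(\MAX(\ell_2(S_k)))\to\M_{1,k}(\MIN(\ell_2(S_k)))$ would be equi-isomorphisms), so the $\MAX$-component must be bounded below on some norm-one rows $x_k$; concatenating these rows and using that the row amplifications of $\MAX(\ell_2)\to R$ are isometric gives $\|q(u(\cdot))\|\gtrsim \delta k^{1/2}$ against $\|[x_1\ \ldots\ x_k]\|_{\M_{N_k}(X)}=1$, contradicting $\|u\|_{\cb}<\infty$. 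You would need an argument of comparable concreteness for your blocks, and none is supplied.
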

 
  We point that, although $X$ does not completely isomorphically embeds into $Y$ in the theorem above, our example produces an $X$ with infinite dimensional subspaces which do completely isomorphically embed into $Y$.


\section{Preliminaries}\label{SectionPrelim}
 In this section, we recall the basics of operator space theory which will be used throughout the paper.    We refer the reader to \cite{Pisier-OS-book} for a monograph on  this theme. We start by saying that, for each $k\in\N$,   $\M_k$ denotes the space of $k$-by-$k$ matrix with complex entries, i.e., $\M_k=\M_k(\C)$.
 
 Let $(X_\lambda)_{\lambda \in \Lambda}$ be a family   of operator spaces. Then $(\bigoplus_\lambda X_\lambda)_{\ell_\infty}$ denotes the \emph{$\ell_\infty$-sum of $(X_\lambda)_{\lambda\in \Lambda}$}, i.e., \[\Big(\bigoplus_\lambda X_\lambda \Big)_{\ell_\infty}=\Big\{(x_\lambda)_\lambda\in X^\Lambda\mid \sup_{\lambda\in \Lambda}\|x_\lambda\|<\infty \Big\}\]  together with the operator space structure given by 
\[\|[x_{ij}]\|_{\M_k((\bigoplus_{\lambda}X_\lambda)_{\ell_\infty})}=\sup_{\lambda\in \Lambda}\|[x_{ij}(\lambda)]\|_{\M_k(X_\lambda)}\]
for all $k\in\N$ and all $ [x_{ij}]=([x_{ij}(\lambda)])_\lambda\in \M_k((\bigoplus_\lambda X_\lambda)_{\ell_\infty})$. If $\Lambda=\N$, the \emph{$c_0$-sum of $(X_n)_n$}, denoted by $(\bigoplus_n X_n  )_{c_0}$,  is the operator subspace of $(\bigoplus_n X_n  )_{\ell_\infty}$ consisting of all $(x_n)_n$ such that $\lim_n\|x_n\|=0$. If all $X_n$'s are the same, say $X=X_n$ for all $n\in\N$, we simply write $c_0(X)$ for $(\bigoplus_nX_n)_{c_0}$. Also, if the sequence $(X_n)_n$ is finite, say $X_1,\ldots, X_k$, we simply write $X_1\oplus_\infty\ldots\oplus_\infty X_k$ for their $c_0$-sum.

Similarly,  $(\bigoplus_nX_n)_{\ell_1}$ denotes the \emph{$\ell_1$-sum of $(X_n)_n$}, i.e., \[\Big(\bigoplus_nX_n\Big)_{\ell_1}=\Big\{(x_n)_n\in X^\N\mid \sum_{n\in\N}\|x_n\|<\infty\Big\}\]  together with the Banach norm $\|(x_n)_n\|_{ (\bigoplus_nX_n )_{\ell_1}}=\sum_{n\in\N}\|x_n\|$ and the operator space structure given by the isometric embedding   \[J\colon \Big(\bigoplus_nX_n\Big)_{\ell_1}\to \Big(\bigoplus_{u\in P}B(H_u)\Big)_{\ell_\infty},\] 
 where $P$ denotes the family of all sequences $u=(u_n)_n$ of completely contractive\footnote{Recall, an operator $u:X\to Y$ between operator spaces is \emph{completely contractive} if $\|u\|_{\cb} \leq 1$.} maps $u_n:X_n\to B(H_u)$  and $J((x_n)_n)=(u_n(x_n))_n$ for all $(x_n)_n\in (\bigoplus_nX_n)_{\ell_1}$ (we can restrict ourselves to the Hilbert spaces $H_u$ whose density chracter does not exceed that of $X$).  If all $X_n$'s are the same, say $X=X_n$ for all $n\in\N$, we simply write $\ell_1(X)$ for $(\bigoplus_nX_n)_{\ell_1}$. If the sequence $(X_n)_n$ is finite, say $X_1,\ldots, X_k$, we simply write $X_1\oplus_1\ldots\oplus_1 X_k$ for their $\ell_1$-sum (see
 \cite[Section 2.6]{Pisier-OS-book}  for details on direct sums  of operator spaces).
 
Given $\lambda\geq 1$,  an operator space $X$ is \emph{$\lambda$-minimal} if for any operator space $Y$ and any operator $u:Y\to X$, we have that $\|u\|_{\cb}\leq \lambda \|u\|$. We say that $X$ is \emph{minimal} if it is $\lambda$-minimal for some $\lambda\geq 1$.\footnote{ We point out that authors interested in the isometric theory of operator spaces often use the term \emph{minimal} to refer to what we are calling a \emph{$1$-minimal} operator space.}

Let $X$ be a Banach space. Then $X$ isometrically embeds into $C(B_{X^*})$ --- the Banach space of continuous functions on the closed unit ball of $B_{X^*}$. Since $C(B_{X^*})$ is a Banach algebra, we can see it as a $\mathrm{C}^*$-subalgebra of  $B(H)$, for some Hilbert space $H$. This gives a canonical operator space structure on $X$ and we denote this operator space by $\MIN(X)$. On the other hand, $X$ can be endowed with the operator space structure given by the isometric embedding 
\[ X\to \Big(\bigoplus_{u\in P} B(H_u)\Big)_{c_0},\]
where $P$ denotes the family of all contractions $u:X\to B(H_u)$ and $J(x)=(u(x))_{u\in P}$. We denote this operator space  by $\MAX(X)$. 
 
Throughout these notes, $R$ and $C$ denote the \emph{row} and the \emph{column operator spaces}, respectively. I.e., let $(e_i)_i$ denote the canonical orthonormal basis of $\ell_2$ and let $(e_{ij})_{i,j\in\N}$ denote the \emph{matrix units} --- that is, the operators in $B(\ell_2)$ such that $e_{ij}e_j=e_i$ and $e_{ij}e_k=0$ for all $i,j,k\in \N$ with $k\neq j$. Then
\[R=\overline{\mathrm{span}}\{e_{1i}\mid i\in\N\}\ \text{ and }\ C=\overline{\mathrm{span}}\{e_{i1}\mid i\in\N\}.\]
Clearly, both $R$ and $C$ are isometric to $\ell_2$; this can be seen since the maps  $r\colon \ell_2\to R$ and $c\colon\ell_2\to C$ determined by $r(e_i)=e_{1i}$ and $c(e_i)=e_{i1}$, for all $i\in\N$, are isometries.   The operator space $R\cap C$ is the Banach space $\ell_2$ together with the operator space structure given by the isometric embedding
\[x\in \ell_2\mapsto (r(x),c(x))\in R\oplus_\infty C.\]
At last, $R+C$ denotes the operator space given by the quotient $R\oplus_1 C/\Delta$, where $\Delta=\{(r(x),-c(x))\mid x\in \ell_2\}$ (see \cite[Page 194]{Pisier-OS-book}).

\section{Embeddings into certain $\ell_1$-sums}
\label{Secl1sum} 
 The main goal of this section is to prove Theorem \ref{Thm.Main.Nonlinear}. For that, we must recall some results obtained in \cite{Braga2021OpSp}. Let $Y$ and $X$ be operator spaces and consider a completely bounded map $Q\colon Y\to X$ which is also a Banach quotient map. For each $m\in\N$, let $Y_m$ be the Banach space such that $Y_m=Y$ as a vector space  and with norm given by 
 \[\|y\|_{Y_m}=\max\{ \|y\|,2^m\|Q(y)\|\} \]
 for all $y\in Y_m$. Moreover, endow $Y_m$ with the operator space structure given by
 \[  \|[y_{ij}]\|_{\M_k(Y_m)}=\max\{ \|[y_{ij}]\|_{\M_k(Y)},2^m\|[Q(y_{ij})]\|_{\M_k(X)}\}\]
 for all $k\in\N$ and all $[y_{ij}]\in \M_k(Y_m)$. It follows from Ruan's Theorem that this indeed induces an operator space structure on each $Y_m$ (\cite[Section 2.2]{Pisier-OS-book}). Notice also that  each $Y_m$ is completely isomorphic to $Y$.

Let $\cZ(Q)=(\bigoplus_mY_m)_{\ell_1}$. Then, by the universal properties of $\ell_1$-directed sums of operator spaces, there is a completely bounded map 
\[\tilde Q\colon \cZ(Q)\to X\]
 such that $\tilde Q\circ i_m= Q$ for all $m\in\N$; where each $i_m\colon Y_m\hookrightarrow \cZ(Q)$ denotes the canonical inclusion (\cite[Subsection 1.4.13]{BlecherLeMerdy2004}). Clearly, $\tilde Q$ is also a Banach quotient map. 
 
We can now state one of the main technical results from \cite{Braga2021OpSp}.
 
 \begin{theorem} \emph{(}\cite[Theorem 4.3]{Braga2021OpSp}\emph{)}
 Let $X$ and $Y$ be operator spaces, $Q\colon  Y \to X $ be a completely bounded
map which is also a Banach quotient, and let $\tilde Q \colon  \cZ(Q) \to X$ be as above. Then, the
bounded subsets of $\cZ(Q)$ and $X\oplus  \ker(\tilde Q)$ are almost completely coarsely 
equivalent.\label{ThmBraga021OpSp}
 \end{theorem}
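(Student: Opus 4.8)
The plan is to produce, for each $n\in\N$, a bijection $f^n\colon\cZ(Q)\to X\oplus\ker(\tilde Q)$ such that the amplifications $f^n_n$ and $(f^n)^{-1}_n$, restricted respectively to $n\cdot B_{\M_n(\cZ(Q))}$ and $n\cdot B_{\M_n(X\oplus\ker(\tilde Q))}$, are equi-coarse embeddings; this is exactly what the almost complete coarse equivalence of bounded subsets asks for, and one should expect to use in an essential way the latitude the definition allows of letting $f^n$ depend on $n$.

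\emph{The natural construction.} Since $Q$ is a Banach quotient, I would fix a bounded homogeneous section $\sigma\colon X\to Y$ with $Q\circ\sigma=\mathrm{id}_X$, $\sigma(0)=0$, and $\|\sigma(x)\|_Y\le 2\|x\|_X$ (Bartle--Graves). Because $\|y\|_{Y_m}=\max\{\|y\|_Y,2^m\|Q(y)\|_X\}$, one has $\|i_m(\sigma(x))\|_{\cZ(Q)}=2^m\|x\|_X$ for $m\ge 1$; so, choosing for each $n$ a probability vector $(\alpha^n_m)_m$ supported on a window of coordinates (to be tuned), set $\hat s^n(x)=\sum_m\alpha^n_m\, i_m(\sigma(x))$ and define
\[ f^n(z)=\bigl(\tilde Q(z),\,z-\hat s^n(\tilde Q(z))\bigr),\qquad g^n(x,w)=\hat s^n(x)+w. \]
Since $\tilde Q\circ i_m=Q$ and $\sum_m\alpha^n_m=1$, one gets $\tilde Q\circ\hat s^n=\mathrm{id}_X$, hence the second coordinate of $f^n(z)$ lies in $\ker(\tilde Q)$, and $f^n$ and $g^n$ are mutually inverse bijections. (Put on $X\oplus\ker(\tilde Q)$ the $\ell_\infty$ operator space direct sum; the $\ell_1$ one is completely isomorphic to it with an absolute constant.)

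\emph{What has to be checked.} The $X$-component is harmless: $\tilde Q$ is completely bounded, so $\|\tilde Q_n(a-b)\|_{\M_n(X)}\le\|\tilde Q\|_{\cb}\|a-b\|$. For the $\ker(\tilde Q)$-component one uses, for $[z_{ij}]\in n\cdot B_{\M_n(\cZ(Q))}$ written coordinatewise as $z_{ij}=(z^{(m)}_{ij})_m$: that $2^m\|[Q(z^{(m)}_{ij})]_{ij}\|_{\M_n(X)}\le n$ for each $m$ (so the quotient mass carried by high coordinates decays); and that $\|[c_{ij}]\|_{\M_n(E)}\le n\max_{ij}\|c_{ij}\|_E$ for every operator space $E$ (split into generalized diagonals). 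One then must control, uniformly in $n$ and both from above (coarse continuity) and below (expansion), the matrix differences of $\hat s^n$ against the ball constraint; here the weights $2^m$ are what make high coordinates ``cheap in the $\M_n(Y)$-direction'' while keeping the $X$-direction controlled, and the window defining $\hat s^n$ should be chosen to balance these two effects.

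\emph{The main obstacle.} The real difficulty is the $\M_n$-behaviour of the section. A bounded --- or even Lipschitz --- section of a Banach quotient need not be completely bounded, and its $n$-th amplification can distort distances by a factor growing with $n$ (correspondingly $\tilde Q$ itself is not a complete metric surjection); in the skeleton above this appears as a factor $\sim n$ coming from $\|[c_{ij}]\|_{\M_n(Y)}\le n\max_{ij}\|c_{ij}\|$ that no single choice of lifting coordinate removes. Absorbing this growth is precisely what the geometric weights $2^m$ on the summands of $\cZ(Q)$ are designed for, and carrying it out rigorously --- which may well force a construction subtler than the straight section above, spreading the lift across coordinates differently at different norm-scales and/or exploiting the $n$-dependence of $f^n$ aggressively --- is the technical heart of the argument; once uniform compression and expansion moduli for $f^n_n$ and $g^n_n$ are in hand, the theorem follows.
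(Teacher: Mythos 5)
You should first be aware that this paper does not prove the statement at all: it is imported verbatim as \cite[Theorem 4.3]{Braga2021OpSp} and used as a black box in the proof of Theorem \ref{Thm.Main.Nonlinear}, so there is no internal proof to compare yours with. Judged on its own terms, your proposal is a setup rather than a proof. The bijections $f^n(z)=(\tilde Q(z),\,z-\hat s^n(\tilde Q(z)))$ and $g^n(x,w)=\hat s^n(x)+w$ are indeed mutually inverse, and the $X$-coordinate is controlled by $\|\tilde Q\|_{\cb}$, but the entire content of the theorem --- uniform-in-$n$ compression and expansion moduli for $f^n_n$ on $n\cdot B_{\M_n(\cZ(Q))}$ and for $g^n_n$ on $n\cdot B_{\M_n(X\oplus\ker\tilde Q)}$ --- is exactly what is never established; you say so yourself when you defer ``the technical heart of the argument'' and concede the construction ``may well force a construction subtler than the straight section above.'' That is a genuine gap, not a detail.

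Two concrete points show the sketch cannot be completed as written. First, with the normalization $\|y\|_{Y_m}=\max\{\|y\|_Y,2^m\|Q(y)\|_X\}$ used here, one has $\|\hat s^n(x)-\hat s^n(x')\|_{\cZ(Q)}\geq \big(\sum_m\alpha^n_m 2^m\big)\|x-x'\|_X$; taking $w=w'=0$ and a single nonzero entry, equi-coarse continuity of $(g^n_n)_n$ already forces $\sup_n\sum_m\alpha^n_m2^m<\infty$, i.e.\ the weights must sit on the first few coordinates uniformly in $n$ --- after which the factors $2^m$ cannot absorb the factor $\sim n$ from $\|[c_{ij}]\|_{\M_n}\leq n\max_{ij}\|c_{ij}\|$ that you correctly identify as the obstacle. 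So the tension you flag is not merely a technicality to be ``tuned''; the averaged-section scheme is self-defeating unless the construction is changed (for instance by letting the lifting coordinate, or the whole map, depend on $n$ in a more drastic way, which is what the $n$-dependence in the definition is for). Second, even at the scalar level $n=1$, a Bartle--Graves selection is homogeneous and continuous but comes with no modulus $\|\sigma(x)-\sigma(x')\|_Y\leq\omega(\|x-x'\|_X)$, and every estimate you need on the $\ker(\tilde Q)$-coordinate (coarse continuity of $f^n$, and the lower bounds for $g^n$ after subtracting the lifted part) requires such a modulus; nothing in the proposal supplies it. Until these two issues are resolved by an actual construction with verified uniform moduli, the statement remains unproved; for a complete argument you should consult the proof of Theorem 4.3 in \cite{Braga2021OpSp}.
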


  \begin{remark}\label{RemarkDif1}
  We point out to the reader that, strictly speaking, the operator spaces $Y_m$ were defined in \cite{Braga2021OpSp} to have norm 
 \[ \vertiii {y}_{Y_m}=\max\{2^{-m} \|y\|,\|Q(y)\|\}. \]
 This is however just a formal difference since $(Y_m,\|\cdot \|_{Y_m})$ and $(Y_m,\vertiii{\cdot}_{Y_m})$ are clearly completely isometric to each other.  So, Theorem \ref{ThmBraga021OpSp} remains valid under this slight change in the definition of the $(Y_m)_m$'s. 
  \end{remark}

 We will prove Theorem \ref{Thm.Main.Nonlinear}  by looking at an appropriate quotient map $Q\colon Y\to X$.  Suppose  $Y=\MAX( L_1)$ and $X=\MIN(\ell_2)$. Since every separable Banach space is a quotient of $\ell_1$ (\cite[Theorem  2.3.1]{AlbiacKaltonBook}) and $\ell_1$ embeds into $L_1$ complementably (\cite[Proposition 5.7.2]{AlbiacKaltonBook}),   there is a completely bounded   map $Q\colon Y\to X$ which is also a Banach quotient.

 \begin{lemma}\label{Lemma1}
 Let $Y=\MAX( L_1)$ and  $X=\MIN(\ell_2)$, and let  $Q\colon Y\to X$ be the map described above. Let $(Y_m)_m$ be as defined above for $Y$. Then $X$ does not almost completely isomorphically embed into $\cZ(Q)=(\bigoplus_m Y_m)_{\ell_1}$.
 \end{lemma}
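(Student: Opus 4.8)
The plan is to show that if $X = \MIN(\ell_2)$ almost completely isomorphically embeds into $\cZ(Q) = (\bigoplus_m Y_m)_{\ell_1}$, then we reach a contradiction by tracking how the amplifications $f^n_n$ interact with the two ``competing'' norms — the ambient $\MAX(L_1)$-norm and the $2^m$-scaled $\MIN(\ell_2)$-norm — that define each $Y_m$. First I would unpack the hypothesis: we have linear operators $f^n \colon X \to \cZ(Q)$ whose $n$-th amplifications $f^n_n \colon \M_n(X) \to \M_n(\cZ(Q))$ are equi-isomorphic embeddings, say with constants $c \leq \|f^n_n(\xi)\|/\|\xi\| \leq C$ for all $\xi \in \M_n(X)$ and all $n$. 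Since $\M_n(\cZ(Q)) = \M_n((\bigoplus_m Y_m)_{\ell_1})$, an element of the range decomposes (at least coordinatewise) as a sum $\sum_m \xi^{(m)}$ with $\xi^{(m)} \in \M_n(Y_m)$, and the $\M_n$-norm of the $\ell_1$-sum dominates $\sum_m \|\xi^{(m)}\|_{\M_n(Y_m)}$ — here I would want to be careful and use the precise definition of the operator space structure on the $\ell_1$-sum via the embedding $J$ into $(\bigoplus_{u\in P} B(H_u))_{\ell_\infty}$, picking good completely contractive test maps $u_m \colon Y_m \to B(H_u)$ to extract lower bounds on individual summands.

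The heart of the argument should exploit the following tension. On one hand, $\M_n(X) = \M_n(\MIN(\ell_2))$ has a very ``small'' operator space structure: for $\MIN$ spaces the matrix norms are as small as Ruan's axioms permit, and in particular $\M_n(\MIN(\ell_2))$ is uniformly (in $n$, up to an absolute constant, or with a $\sqrt{n}$-type factor that one must control) close to $\M_n$ acting on $\ell_2$ in a controlled way. On the other hand, $Y_m = \MAX(L_1)$ with the extra $2^m\|Q(\cdot)\|$ term: the $\MAX$-structure on $L_1$ is ``large'', so $\M_n(Y_m)$-norms are big, which means the isomorphic-embedding upper bound $\|f^n_n(\xi)\|_{\M_n(\cZ(Q))} \leq C\|\xi\|$ forces the components $\xi^{(m)}$ living in $\M_n(Y_m)$ to be correspondingly tiny in the underlying $L_1$ — and then the $2^m\|Q(\xi^{(m)})\|$ term forces $Q(\xi^{(m)})$ to be tiny for large $m$, so essentially all the ``mass'' of $f^n_n(\xi)$ must sit in finitely many coordinates $m \leq m_0(n)$. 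For a fixed $n$, then, $f^n_n$ is (up to small error) an isomorphic embedding of $\M_n(\MIN(\ell_2))$ into $\bigoplus_{m\leq m_0}\M_n(Y_m)$ with $m_0$ coordinates; since each $Y_m$ is completely isomorphic to $\MAX(L_1)$, this would say $\M_n(\MIN(\ell_2))$ embeds with uniformly bounded constants into a fixed-coordinate $\ell_1$-sum of $\M_n(\MAX(L_1))$-copies. The contradiction I would aim for is that the matrix norms of $\MIN(\ell_2)$ and of $\MAX(L_1)$ grow at incompatible rates: there should be elements $\xi_n \in \M_n(\MIN(\ell_2))$ (built from the matrix units or from the identity matrix viewed appropriately) whose $\M_n$-norm is controlled but whose image, after passing through any bounded map into $\MAX(L_1)$-amplifications, must either blow up or collapse, contradicting the two-sided bound $c\|\xi\| \leq \|f^n_n(\xi)\|$. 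Concretely, one wants a sequence witnessing that $\MIN(\ell_2)$ does not completely isomorphically (even ``almost completely'') embed into $\MAX$-type spaces, combined with the truncation argument to reduce $\cZ(Q)$ to finitely many $Y_m$'s.

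The main obstacle, I expect, is making the truncation quantitative and uniform in $n$: the cutoff $m_0(n)$ beyond which the $2^m$-weights kill the $Q$-components depends on how large the $\M_n$-norms in $X$ can be relative to $\|Q\|_{\cb}$ and the embedding constants, and one must ensure the residual tail $\sum_{m > m_0} \xi^{(m)}$ can be absorbed into an error small compared to $c\|\xi\|$ uniformly — this likely requires choosing the test vectors $\xi \in \M_n(X)$ cleverly (e.g. with $\|Q_n(\xi)\|$ comparable to $\|\xi\|$, so that a nontrivial $Q$-component is forced, not just permitted) and then using that $Q$ is a complete quotient map from $\MAX(L_1)$ onto $\MIN(\ell_2)$ to say something about $Q$-components of the image. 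A secondary subtlety is that ``almost complete isomorphic embedding'' only gives us the diagonal amplifications $f^n_n$, not a single map, so all estimates must be phrased for the $n$-th amplification at level $n$ only, and the contradiction must come from letting $n \to \infty$ along a sequence where the matrix-norm discrepancy between $\MIN(\ell_2)$ and the finite $\ell_1$-sum of $\MAX(L_1)$'s provably diverges; identifying the right such sequence (presumably related to the fact that $\ell_2$ with its minimal quantization has cotype-$2$-like matrix behavior while $\MAX(L_1)$ does not, or via Haagerup-tensor / operator-space Grothendieck-type estimates) is where the real work of Lemma \ref{Lemma1} lies.
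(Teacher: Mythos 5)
Your overall strategy has the right flavour (separate the weighted quotient part from the ambient $\MAX(L_1)$ part and then play the smallness of $\MIN(\ell_2)$ matrix norms against the largeness of max norms), but two of your concrete steps fail and the real engine of the proof is missing. First, the truncation step is not available: the matrix norm of $\M_n\big((\bigoplus_m Y_m)_{\ell_1}\big)$ does \emph{not} dominate $\sum_m\|\xi^{(m)}\|_{\M_n(Y_m)}$ (you have the inequality backwards; only the supremum over $m$ is a lower bound, via the completely contractive coordinate projections), and the upper bound $\|f^n_n(\xi)\|\leq C\|\xi\|$ only controls the weighted images $2^m\|Q(\cdot)\|$, not the $L_1$-components themselves, which can be spread over \emph{all} coordinates (take components near $\ker Q$). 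So you cannot reduce to a finite $\ell_1$-sum of copies of $\M_n(\MAX(L_1))$ with uniform constants. The paper avoids this entirely: it keeps all coordinates, embeds $\cZ(Q)$ into $\ell_1(Y)\oplus_\infty\ell_1(X)$, and identifies the range of the first projection $P_1$ with the single space $L_1(\N\times(0,1))$ carrying its maximal operator space structure (an $\ell_1$-sum of max spaces is again max). Moreover, the step that neutralizes the quotient part is a genuine Banach-space argument you do not supply: for a fixed linear $u=\oplus_m u(m)$ one produces an \emph{orthonormal} sequence $(\xi_i)$ with $\sum_m 2^m\|Qu(m)\xi_i\|\to 0$, using that $Q$ factors through a strictly singular operator $\ell_1\to\ell_2$, a gliding-hump, and the incompatibility of $\ell_1$-growth of $\|v(\sum_{i\le k}\xi_i)\|$ with the $k^{1/2}$-growth of $\|\sum_{i\le k}\xi_i\|_{\ell_2}$. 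Your alternative suggestion of choosing test vectors with ``$\|Q_n(\xi)\|$ comparable to $\|\xi\|$'' does not even typecheck ($Q$ is defined on $Y$, not on $X$) and points in the opposite direction: one wants the quotient part of the image to be negligible, so that $\|P_1u\xi_i\|\geq 1$.

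Second, the decisive operator-space ingredient is only gestured at (``cotype-2-like behaviour'', ``Grothendieck-type estimates''), whereas the proof needs a specific quantitative mechanism applicable to \emph{arbitrary} norm-bounded-below sequences in $\MAX(L_1(\mu))$, since one has no control over where $P_1u\xi_i$ lands. The paper combines: (i) the identity $\MIN(\ell_2)\to R+C$ is not completely bounded, giving for each $c$ a fixed matrix level $m$ and matrices $a_1,\dots,a_N$ with $\|\sum_i a_i\otimes\xi_i\|_{\M_m\otimes\MIN(\ell_2)}=1$ but $\|\sum_i a_i\otimes e_i\|_{\M_m\otimes(R+C)}>c$; (ii) \cite[Proposition 4.3]{Oikhberg2004}, which says that any sequence of vectors of norm at least $1$ in $\MAX(L_1(\mu))$ completely dominates the $R+C$ basis with a universal constant; and (iii) a torus-randomization/complete $1$-unconditionality argument in $S_1^n[L_1(\nu\otimes\mu)]$ to transfer the $R+C$ lower bound to $\big\|\sum_i\omega_i a_i\otimes P_1u\xi_i\big\|_{\M_m\otimes L_1(\mu)}\geq\kappa c$ for some choice of unimodular $\omega_i$. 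This yields $\|u_m\|\geq\kappa c$ at a level $m$ depending only on $c$ (not on $u$), which is precisely what resolves the ``only the $n$-th amplification is controlled'' issue you flag at the end: for $n\geq m$ one has $\|f^n_n\|\geq\|f^n_m\|$, so the equi-boundedness of the amplifications is contradicted. Without concrete substitutes for (ii) and (iii), the claimed incompatibility of $\MIN$ and $\MAX$ matrix norms remains a hope rather than a proof, so as written the argument has a genuine gap.
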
 
 
 \begin{proof}
We start by noticing that $\cZ(Q)$ can be viewed as being embedded into $\ell_1(Y)\oplus_\infty \ell_1(X)$.  Indeed, for each $m\in\N$, let $J(m)\colon Y_m\to Y\oplus_\infty X$ be the map given by 
\[J(m)(y)=( y,2^mQ(y))\]
  for all $y\in Y_m$. So, each $J(m)$ is a complete isometric embedding and this allows us to view each $Y_m$ as a subspace of $Y\oplus_\infty X$. Consequently, we can view $\cZ(Q)$ as a subspace of $\ell_1(Y\oplus_\infty X)$. Since $\ell_1(Y\oplus_\infty X)$ is  completely isomorphic to $\ell_1(Y)\oplus_\infty \ell_1(X)$, we can view $\cZ(Q)$ as being embedded in it. Let \[P_1\colon \ell_1(Y)\oplus_\infty \ell_1(X)\to \ell_1(Y)\ \text{ and }\  P_2\colon \ell_1(Y)\oplus_\infty \ell_1(X)\to \ell_1(X)\] denote the standard projections.

 \begin{claim}\label{l:ignore quotients}
Given any linear map $u =\oplus_mu(m)\colon  X \to \cZ(Q)$, there exists an orthonormal sequence $(\xi_i)_i$ in $ X$ such that $\lim_i \sum_m 2^m \|Q u(m) \xi_i\| = 0$. Consequently, $\lim_i \|P_2 u \xi_i\| = 0$.
 \end{claim}
 
 \begin{proof}
We start by setting some notation. For each $m\in\N$, let $v(m)  = 2^m Q u(m)$. As $X=\ell_2$ as a Banach space, we have that  $v(m)\in B(\ell_2)$. As $u$ takes values in $\cZ(Q)$, the definition of the norm in this space implies that  $v = \oplus_m v(m) \in B(\ell_2, \ell_1(\ell_2))$. Finally, for each $I\subset  \N$, let   $v(I)= \oplus_{m\in I} v(m)$ and $v(I)^\perp = \oplus_{m\not\in I}  v(m)$.  We now  point out the following basic facts:
 
 \begin{itemize}
 \item For any finite dimensional $E \subset \ell_2$ and any $\varepsilon > 0$, there exists $k \in \N$ such that $\|v({\{1,\ldots, k\}})^\perp \xi\| \leq \varepsilon \|\xi\|$ for any $\xi \in E$. 
 
 \item  As $Q$ is the composition of an operator $L_1\to\ell_1$ with an operator $\ell_1\to \ell_2$ and  any operator $\ell_1\to \ell_2$ is strictly singular\footnote{Recall, an operator $u:X\to Y$ between Banach spaces is \emph{strictly singular} if none of its restrictions to an infinite dimensional subspaces of $X$ is an isomorphic embedding.  For more on strictly singular operators, see \cite[Section 2.c]{LT1}.}   (\cite[Theorem 2.1.9]{AlbiacKaltonBook}), we have that each $v(n)$ is strictly singular.  Consequently, each $v(I)$ is strictly singular for any finite $I\subset \N$. 
 \end{itemize}
 
Fix $\varepsilon > 0$. Notice that for any finite dimensional $E \subset \ell_2$, there exists an infinite dimensional $F \subset E^\perp$ such that $\|v\xi\| \leq \varepsilon \|\xi\|$ for any $\xi \in F$. Indeed, suppose this statement is false. Then, since each $v(I)$, for $I\subset \N$ finite, is strictly singular, we can find  sequences $(I_i)_i$ and $(\xi_i)_i$ such that 

\begin{enumerate}
\item each $I_i$ is a finite subset of $ \N$ and $\max (I_i)<\min(I_{i+1})$ for all $i\in\N$, 
\item   $(\xi_i)_i$ is a normalized  sequence in $\ell_2$ equivalent to its standard unit basis and such that   $\|v\xi_i\|\geq \eps$ for all $i\in\N$, and 
\item $\|v\xi_i-v({I_i})\xi_i\|\leq 2^{-i}$ for all $i\in\N$.
\end{enumerate}
Therefore, up to a constant $C>0$ independent on $k$, we have that \[\Big\|v\Big(\sum_{i=1}^k\xi_i\Big)\Big\|\geq Ck\] for all $k\in\N$. On the other hand, since  $(\xi_i)_i$ is   equivalent to its standard unit basis of $\ell_2$, we have that, \[\Big\|\sum_{i=1}^k\xi_i\Big\|\leq Dk^{1/2}\]  for all $k\in\N$, where   $D>0 $ is another constant independent on $k$. This gives us a contradiction since $v$ is bounded.

 We now construct the  required orthonormal sequence $(\xi_i)_i$ recursively as follows.  Pick a  norm one $\xi_1\in \ell_2=X$ with finite support (with respect to the canonical basis) and such that $\|v\xi_1\|\leq 2^{-1}$. Suppose finitely supported normalized vectors $\xi_1, \ldots, \xi_i\in \ell_2$ have been chosen so that 
 \begin{enumerate}
  \item $\supp(\xi_j)<\supp(\xi_{j+1})$ for all $j\in \{1,\ldots, i-1\}$, and 
  \item  $\|v\xi_j\|\leq 2^{-j}$ for all $j\in \{1,\ldots, i\}$.
\end{enumerate}    By the previous paragraph, we can choose a norm one finitely supported $\xi_{n+1}\in \ell_2$ such that  $\supp(\xi_{i+1})>\supp(\xi_{i})$ and   $\|v \xi_{i+1}\| < 2^{-i-1}$. It follows straightforwardly  from the definition of $v$ and the norm in $\ell_1(\ell_2)$ that $\lim_i \sum_m 2^m \|Q u(m) \xi_i\| = 0$.
 \end{proof}

 \begin{claim}\label{p:lower bound}
 For every $\gamma > 0$ there exists $m \in \N$ such that any operator $u \colon  X \to \cZ(Q)$ with $\|u^{-1}\| \leq 1$ satisfies $\|u_m\| \geq \gamma$.
 \end{claim}

 \begin{proof} 
Fix $\gamma>0$. Fix $u\colon X\to \cZ(Q)$ with $\|u^{-1}\|\leq 1$ --- we will find $m\in\N$ below which does not depend on $u$.   By Claim \ref{l:ignore quotients}, there exists an orthonormal sequence $(\xi_i)_i$ in $ \ell_2$ such that $\|P_2 u \xi_i\| <  2^{-i}$ for any $i\in\N$. Therefore,   \[ \|P_1u \xi_i\|\geq   \|u ^{-1}\| \|P_1u \xi_i\| = \|u ^{-1}\|\|u \xi_i\|\geq\|\xi_i\|\geq 1\]
for all $i\in\N$. 
 
 Notice that the range of $P_1$ can be identified with $\ell_1(L_1) = L_1(\N \times (0,1))$, endowed  with its natural maximal operator space structure, where   $\N\times (0,1)$ is considered together with its canonical measure, which we denote by	 $\mu$. As usual, we write $L_1(\mu)=L_1(\N \times (0,1))$.
 
Let $c > 0$ be a constant to be determined later. Since $R+C$ is not a minimal operator space, the identity $\MIN(\ell_2)\to R+C$ is not completely bounded. Hence,  there are   $m,N\in\N$, and $a_1, \ldots, a_N \in \M_m$, such that \[\Big\|\sum_i a_i \otimes \xi_i\Big\|_{\M_m \otimes \MIN(\ell_2)} = 1\ \text{ and }\ \Big\|\sum_i a_i \otimes \xi_i\Big\|_{\M_m \otimes (R+C)} > c\] 
(by \cite{Pisier-OS-book}, one can take $N \sim c^2$). Let ${\mathbb{T}}$ denote the unit torus, i.e., $\mathbb T=\{z\in \C\mid |z|=1\}$. Then, by the $1$-homogeneity of $\MIN(\ell_2)$,  we have that 
\begin{align*}
 \Big\|\sum_i \omega_i a_i \otimes \xi_i\Big\|_{\M_m \otimes \MIN(\ell_2)} &= \Big\|\sum_i a_i \otimes \omega_i \xi_i\Big\|_{\M_m \otimes \MIN(\ell_2)}\\
 & = \Big\|\sum_i a_i \otimes \xi_i\Big\|_{\M_m \otimes \MIN(\ell_2)} = 1
\end{align*}
for all $\omega = (\omega_i)_{i=1}^N \in {\mathbb{T}}^N$.   We will now show that there exists   $\omega = (\omega_i)_{i=1}^N \in {\mathbb{T}}^N$ such that \[\Big\|\sum_{i=1}^N \omega_i a_i \otimes P_1 u \xi_i\Big\|_{\M_m \otimes L_1(\mu)} \geq \kappa c,\] where $\kappa$ is a universal constant. From this inequality, it follows that $\|u_m\| \geq \kappa c$. Therefore,  taking $c = \gamma/\kappa$,  the proof of the proposition will be completed. Indeed, notice that $m\in\N$ depends only on $c$, therefore, since $\kappa$ is a universal constant, $m$ depends only on $\gamma$ and not on $u$.

 Let $\nu$ be the rotation invariant probability measure on ${\mathbb{T}}^N$.  For each $i\in \{1,\ldots, N\}$, let  $\phi_i\colon \mathbb T^N\to \mathbb T$ be the canonical projection onto the  $i$-th coordinate of $\mathbb T^N$. 
 We claim that $(\phi_i \otimes P_1 u \xi_i)_{i=1}^N$ is a \emph{$1$-completely unconditional basic sequence} in $L_1(\nu \otimes \mu)$ --- that is, for any sequence $(\alpha_i)_{i=1}^N$ of complex numbers, with $\max_i |\alpha_i| \leq 1$, the ``diagonal'' map
 $ \Phi_\alpha$ on $F = \spn[ \phi_i \otimes P_1 u \xi_i : 1 \leq i \leq N]$, taking $\phi_i \otimes P_1 u \xi_i$ to $\alpha_i \phi_i \otimes P_1 u \xi_i$, is completely contractive. By convexity, it suffices to show this holds if $|\alpha_i| = 1$ for every $i\leq N$.

To evaluate $\|\Phi_\alpha\|_{\cb}$ in this setting, it is more convenient to work not with the injective tensor product with matrix spaces $\M_n$, but rather, to consider the ``dual'' setting.
 For each $n\in\N$, $S_1^n$ denotes the operator space of  $n\times n$ trace class operators and, if $E$ is another operator space, then $S^n_1[E]$ denotes the projective operator space tensor product $S^n_1\otimes ^{ \wedge} E$. The reader is referred to \cite[Chapter 7]{ER}, \cite{Pisier-Asterisque}, or \cite[Chapter 4]{Pisier-OS-book} for a detailed treatment of this tensor product. 
 Here we mention two properties important for us.
 \begin{enumerate}
     \item For any map $T : E \to F$ between operator spaces, $\|T\|_{\cb} = \sup_n \|I \otimes T : S^n_1[E] \to S^n_1[F]$ (this follows from \cite[Lemma 1.7]{Pisier-Asterisque}, or by duality from \cite[Proposition 7.1.6]{ER}).
     \item  For any $\sigma$-finite measure $\mu$, $S^n_1[L_1(\mu)]$ is isometrically identified with $L_1(\mu, S^n_1)$ (see \cite[Proposition 2.1]{Pisier-Asterisque}).
 \end{enumerate}

 %
 Notice that,   if $(y_i)_{i=1}^N$ is in $ S^n_1$, then
 \begin{align*}
\Big \|\sum_{i=1}^N y_i \otimes \phi_i \otimes P_1 u \xi_i\Big\|_{S^n_1[L_1(\nu \otimes \mu)]} &
= \int \Big\|\sum_{i=1}^N y_i \otimes \phi_i(\omega) P_1 u \xi_i\Big\|_{L_1(\mu, S^n_1)} \, d\nu(\omega) 
\\ & = \int \Big\|\sum_{i=1}^N y_i \otimes \omega_i P_1 u \xi_i\Big\|_{L_1(\mu, S^n_1)} \, d\nu(\omega) .
 \end{align*}
 Now suppose $|\alpha_i| = 1$ for $1 \leq i \leq N$. By the rotation invariance of $\nu$, the right hand side equals
 \begin{align*}
 \int \Big\|\sum_{i=1}^N y_i \otimes \alpha_i \omega_i P_1 u \xi_i\Big\|_{L_1(\mu, S^n_1)} \, d\nu(\omega)
 & = \Big \|\sum_{i=1}^N y_i \otimes \alpha_i \phi_i \otimes P_1 u \xi_i\Big\|_{S^n_1[L_1(\nu \otimes \mu)]}
 \\ & = \Big \|(I \otimes \Phi_\alpha) \sum_{i=1}^N y_i \otimes \phi_i \otimes P_1 u \xi_i\Big\|_{S^n_1[L_1(\nu \otimes \mu)]}. 
 \end{align*}
Thus, $\Phi_\alpha$ is a complete isometry on $\spn[ \phi_i \otimes P_1 u \xi_i : 1 \leq i \leq N]$. This establishes the desired unconditionality.

 Notice that \[\|\phi_i \otimes P_1 u\xi_i\|_{L_1(\nu\otimes  \mu)} = \|P_1 u\xi_i\|_{L_1(\mu)}\geq 1\]
 for all $i\in\{1,\ldots, N\}$. Hence,  \cite[Proposition 4.3]{Oikhberg2004} gives  a constant $\kappa>0$ (independent on $u$ and $N$) such that  the operator
$
 T\colon F \to R+C $ determined by $ \phi_i \otimes P_1 u\xi_i \mapsto   e_i$, for each $i\in\N$,  has c.b.~norm at most $1/\kappa$; here $(e_i)_i$ denotes the canonical basis of $R+C$.  As noted above, for every $n$ we have
 $$
 \| {\mathrm{Id}} \otimes T : S_1^n[F] \to S_1^n[R+C]\| \leq \|T\|_{\cb} \leq \frac1\kappa .
 $$

As $\|\sum_i a_i \otimes \xi_i \|_{\M_m \otimes (R+C)} > c$,  it follows from the $1$-homogeneity of $R+C$ that    $\|\sum_i a_i \otimes e_i \|_{\M_m \otimes (R+C)} > c$. Therefore, \cite[Lemma 1.7]{Pisier-Asterisque} gives us  $m \times m$ matrices $b_1$ and $ b_2$ of Hilbert-Schmidt norm one, such  that \[\Big\|\sum_i b_1 a_i b_2 \otimes e_i\Big\|_{S^m_1[R+C]} > c.\]  
 By $1$-homogeneity of $R+C$ again, the same inequality holds if each $a_i$ above is replaced by $\omega_i a_i$, where  $(\omega_i)_{i=1}^N$ is an arbitrary element of ${\mathbb{T}}^N$. Consequently, 
 \begin{align*}
 &
  \int \Big\| \sum_i \omega_i b_1 a_i b_2 \otimes P_1 u \xi_i\Big\|_{S^m_1[L_1(\mu)]} \, d \nu(\omega)
  \\
 &
 =
 \Big\| \sum_i b_1 a_i b_2 \otimes \phi_i  \otimes P_1 u \xi_i\Big\|_{S^m_1[L_1(\nu \otimes \mu)]} 
 \\
&\geq  \|\mathrm{Id}\otimes T\|^{-1}  \Big\| \sum_i b_1 a_i b_2 \otimes    e_i\Big\|_{S^m_1[R+C]}    \\ 
  &>\kappa c .
\end{align*}
 Thus, there exists $\omega = (\omega_i)_{i=1}^N \in {\mathbb{T}}^N$ such that \[\Big\| \sum_i \omega_i b_1 a_i b_2 \otimes P_1 u \xi_i\Big\|_{S^m_1[L_1(\mu)]} \geq \kappa c.\] Applying \cite[Lemma 1.7]{Pisier-Asterisque} again, we conclude that \[\Big\|\sum_i \omega_i a_i \otimes P_1 u \xi_i\Big\|_{\M_m \otimes L_1(\mu)} \geq \kappa c\]
 and, by the discussion above, this proves the claim.
 \end{proof}
 
 Claim \ref{p:lower bound} immediately implies that there is no almost completely isomorphic embedding of $X$ into $\cZ(Q)$; so we are done. 
  \end{proof}
  
  \begin{proof}[Proof of Theorem \ref{Thm.Main.Nonlinear}]
   Let $Y=\MAX( L_1)$ and $X=\MIN(\ell_2)$. Let $Q\colon Y\to X$ be the quotient map described before Lemma \ref{Lemma1}.   Let $(Y_m)_m$, $\cZ(Q)$ and $\tilde Q$ be as above. By Theorem \ref{ThmBraga021OpSp},  the
bounded subsets of $\cZ(Q)$ and $X\oplus  \ker(\tilde Q)$ are almost completely coarsely equivalent.

Note that the formal identity between $(\oplus_{m \geq 1} Y_m)_{\ell_1}$ and $(\oplus_{m \geq 2} Y_m)_{\ell_1}$ is a (complete) isomorphism. Consequently, $\cZ(Q)$ is (completely) isomorphic to $Y \oplus \cZ(Q)$. 

It is well known that $\ell_2$ isometrically embeds into $L_1$ (\cite[Theorem 6.4.17]{AlbiacKaltonBook}). So, in our case,  $X\oplus \ker(\tilde Q)$ isomorphically embeds into $\cZ(Q)$. We are left to notice that $X\oplus \ker(\tilde Q)$ does not almost completely  isomorphically embed into $\cZ(Q)$. For that, it is enough to show that $X$ does not almost completely  isomorphically embed into $\cZ(Q)$.  This is precisely Lemma \ref{Lemma1}, so we are done. 
  \end{proof}
  
 \section{Embeddings into certain $c_0$-sums}\label{Secc0sum}
 
In this section, we leave the nonlinear theory aside and concentrate fully on the completely isomorphic theory of $c_0$-sums of operator spaces. Precisely, we study the extent to which Proposition \ref{no embedding}  remains valid for operator spaces. In fact, a slightly stronger, and more technical, result will be the main focus of this section. Its proof  follows from a simple gliding hump argument  (cf. \cite[Proposition 2.c.4]{LT1}). 

\begin{proposition}[Folklore]
Suppose $X$ and $(Y_i)_{i\in\N}$ be Banach spaces and assume that  $X$ is infinite dimensional and does not contain an isomorphic copy of $c_0$. If all operators $X\to Y_i$, for $i\in\N$, are strinctly singular, then   $X$ does not  isomorphically embed into $(\bigoplus_i Y_i)_{c_0}$.\label{no embedding.revisited}
\end{proposition}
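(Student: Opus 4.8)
The plan is the standard sliding-hump (gliding hump) argument. Suppose, toward a contradiction, that $T\colon X\to(\bigoplus_iY_i)_{c_0}$ is an isomorphic embedding; write $c=\|T\|$ and let $C$ be such that $C^{-1}\|x\|\le\|Tx\|$ for all $x\in X$. For each $i$ let $P_i\colon(\bigoplus_jY_j)_{c_0}\to Y_i$ be the coordinate projection, so each $P_iT\colon X\to Y_i$ is strictly singular by hypothesis. The first point I would record is that for every finite set $F\subset\N$ the operator $S_F\colon X\to(\bigoplus_{i\in F}Y_i)_{\ell_\infty}$, $S_Fx=(P_iTx)_{i\in F}$, is again strictly singular: it equals $\sum_{i\in F}\iota_iP_iT$, where $\iota_i\colon Y_i\hookrightarrow(\bigoplus_{j\in F}Y_j)_{\ell_\infty}$ is the canonical isometric inclusion, and the strictly singular operators form a closed subspace of the bounded operators which is stable under composition on either side (see \cite[Section 2.c]{LT1}). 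Since $X$ is infinite dimensional, $S_F$ being strictly singular means precisely that for every $\delta>0$ there is a norm one $x\in X$ with $\max_{i\in F}\|P_iTx\|<\delta$.

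Fix a summable sequence $(\varepsilon_n)_n$ of positive reals (to be pinned down at the end) and construct, recursively, norm one vectors $x_n\in X$ and integers $0=k_0<k_1<k_2<\cdots$ as follows. Given $k_0,\dots,k_{n-1}$, apply the observation above with $F=\{1,\dots,k_{n-1}\}$ and $\delta=\varepsilon_n$ to obtain a norm one $x_n\in X$ with $\max_{i\le k_{n-1}}\|P_iTx_n\|<\varepsilon_n$; then, since $Tx_n\in(\bigoplus_iY_i)_{c_0}$, pick $k_n>k_{n-1}$ with $\sup_{i>k_n}\|P_iTx_n\|<\varepsilon_n$. Put $w_n=(P_iTx_n)_{k_{n-1}<i\le k_n}$, the truncation of $Tx_n$ to the block of coordinates $(k_{n-1},k_n]$; then $\|Tx_n-w_n\|<\varepsilon_n$, and hence $C^{-1}-\varepsilon_n\le\|w_n\|\le c+\varepsilon_n$.

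The vectors $w_n$ are supported on pairwise disjoint blocks of coordinates, so $\|\sum_na_nw_n\|=\sup_n|a_n|\,\|w_n\|$ for all scalars $(a_n)$; combined with the two-sided bound on $\|w_n\|$, this shows $(w_n)_n$ is equivalent to the unit vector basis of $c_0$, with constants depending only on $c$, $C$ and $\sum_n\varepsilon_n$. A routine perturbation argument then gives that, once $\sum_n\varepsilon_n$ is taken small enough relative to $C^{-1}$, the sequence $(Tx_n)_n$ is equivalent to $(w_n)_n$, hence to the $c_0$-basis; and since $T$ is an isomorphic embedding, $(x_n)_n$ is equivalent to $(Tx_n)_n$. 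Thus $\overline{\spn}\{x_n:n\in\N\}$ is isomorphic to $c_0$, contradicting the hypothesis that $X$ contains no isomorphic copy of $c_0$, which finishes the proof.

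The argument is elementary; the two places that need a little attention are (i) the ideal properties of the strictly singular operators, which is exactly what makes the recursive step possible — here strict singularity of the $P_iT$, rather than mere boundedness, is essential, as is the infinite-dimensionality of $X$ — and (ii) keeping track of the (fixed, and after an initial normalization $\varepsilon$-independent) constants in the final perturbation step so that $(Tx_n)$ genuinely inherits the $c_0$-estimate of the block sequence $(w_n)$. I do not expect any genuine obstacle beyond this bookkeeping.
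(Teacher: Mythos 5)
Your proposal is correct and follows the same standard gliding-hump argument the paper invokes (and sketches for the analogous Proposition on completely bounded maps): use strict singularity of the finite coordinate sums to pick normalized vectors whose images are, up to a summable error, disjointly supported blocks, conclude these images are equivalent to the $c_0$-basis, and pull back through the embedding to contradict the absence of $c_0$ in $X$. The only difference is that you spell out the perturbation and ideal-property bookkeeping that the paper leaves implicit.
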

 
 As a warm up for what is to come, we start with an elementary proposition:

 \begin{proposition}\label{compact operator into}
 Suppose $X$ and $(Y_i)_{i\in\N}$  are operator spaces such  that $X$ has no subspace isomorphic to $c_0$ and  every  completely bounded map from $X$ to $Y_i$, $i\in\N$,  is strictly singular. Then $X$ does not completely isomorphically embed into $(\bigoplus_i Y_i)_{c_0}$.
 \end{proposition}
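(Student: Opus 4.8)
The plan is to mimic the classical gliding-hump argument behind Proposition \ref{no embedding.revisited}, but carried out at every matrix level simultaneously so that the estimates are completely bounded. Suppose, for contradiction, that $T\colon X\to (\bigoplus_i Y_i)_{c_0}$ is a complete isomorphic embedding, and write $T=(T_i)_i$ where $T_i\colon X\to Y_i$ is the composition of $T$ with the $i$-th coordinate projection. Each $T_i$ is completely bounded, hence strictly singular by hypothesis. The first step is to record the standard fact that a strictly singular operator between Banach spaces is ``small on some subspace of finite codimension up to compact perturbation'': more precisely, for every finite set $I\subset\N$, every finite-dimensional subspace $E\subset X$, and every $\varepsilon>0$, there is a finite-dimensional subspace $E'\subset X$ with $E'\supset E$ such that $\|T_i x\|\le\varepsilon\|x\|$ for every $i\in I$ and every $x$ in some infinite-codimensional complement — actually what one really needs is the dual statement used inside Claim \ref{l:ignore quotients}: for any finite-dimensional $E\subset X$ and any $\varepsilon>0$ there is a normalized vector $\xi$ almost orthogonal to $E$ with $\|T_i\xi\|\le\varepsilon$ for all $i\in I$. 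This follows because a finite $\ell_\infty$-sum $\bigoplus_{i\in I}T_i$ is again strictly singular.

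The second step is the gliding hump itself. Using that $T x\in c_0$ for each fixed $x$, together with strict singularity, one builds recursively a normalized sequence $(\xi_k)_k$ in $X$ and an increasing sequence of integers $0=n_0<n_1<\cdots$ such that (i) $(\xi_k)_k$ is a basic sequence equivalent to a block basis (say, nearly ``disjointly supported'' in $X$ in the weak sense that they span an increasing chain of subspaces), (ii) $\|T_i\xi_k\|\le 2^{-k}$ for all $i\le n_{k-1}$, and (iii) $\|T_i\xi_k\|\le 2^{-k}$ for all $i> n_k$ (the latter using $T\xi_k\in c_0$). Thus $T\xi_k$ is, up to a $2^{-k}$ error, supported in the band $n_{k-1}<i\le n_k$; these bands are disjoint. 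Since $T$ is a complete isomorphic embedding it is in particular a Banach-space isomorphic embedding, so $\|T\xi_k\|\approx 1$, which forces the ``essential mass'' of $T\xi_k$ to live in its band and be bounded below.

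The third step extracts the contradiction. Because the bands are disjoint, for any scalars $(a_k)_{k=1}^K$ the vector $T(\sum_k a_k\xi_k)=\sum_k a_k T\xi_k$ has, modulo a summable error, norm in $(\bigoplus_i Y_i)_{c_0}=\ell_\infty$-type sum comparable to $\max_k|a_k|\,\|T\xi_k\|\approx\max_k|a_k|$. Since $T$ is an isomorphic embedding, this means $\|\sum_k a_k\xi_k\|_X\approx\max_k|a_k|$ uniformly in $K$, i.e.\ $(\xi_k)_k$ spans a copy of $c_0$ inside $X$, contradicting the hypothesis that $X$ contains no isomorphic copy of $c_0$. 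This already contradicts the Banach-space structure, so in fact one never needs the operator-space strength of $T$ beyond the fact that $c.b.$ maps are strictly singular; the completely bounded hypothesis is used only to guarantee that each $T_i$ is strictly singular. I expect the only mildly delicate point to be the bookkeeping in the recursion to simultaneously control finitely many $T_i$'s from below (via strict singularity applied to $\bigoplus_{i\le n}T_i$) and the tail $i>n_k$ (via $T\xi_k\in c_0$); this is routine but should be written carefully so the two mechanisms — strict singularity on the head, $c_0$-decay on the tail — interleave correctly, exactly as in \cite[Proposition 2.c.4]{LT1}.
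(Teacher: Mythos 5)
Your proposal is correct and is essentially the paper's argument: decompose the embedding as $u=\oplus_i u_i$, use strict singularity of the (finite sums of the) coordinate maps together with the $c_0$-decay of tails to run a gliding hump producing normalized vectors whose images are essentially disjointly supported, and conclude that these vectors span a copy of $c_0$ in $X$, a contradiction. The only caveat is routine bookkeeping: the block errors must be taken summably small relative to $\|u^{-1}\|^{-1}$ so the perturbation argument gives the $c_0$-lower estimate, exactly as in the standard reference you cite.
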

 
 As the expert will notice, the proof of Proposition \ref{compact operator into} is essentially the one of Proposition \ref{no embedding.revisited}. We point out however that Proposition \ref{compact operator into} is \emph{not} the operator space version of Proposition \ref{no embedding.revisited}. Indeed, a truely operator space version would only assume that all operators $X\to Y_i$ are \emph{completely strictly singular}, meaning that their restrictions to infinite subspaces of $X$ are not complete isomorphic embeddings (but they can be isomorphic embeddings).

\begin{proof} [Proof of Proposition \ref{compact operator into}]
Suppose, for the sake of contradiction, that there exists a complete isomorphic embedding $u : X \to (\bigoplus_i Y_i)_{c_0}$.
 Write $u = \oplus_i u_i$; so, the hypothesis imply that each  $u_i : X \to Y_i$ is strictly singular.
A standard gliding hump argument produces a normalized   sequence $(x_j)_j \subset X$, for which there exists a sequence $(I_j)_j$ of intervals of $\N$ such that $\max(I_j)<\min(I_{j+1})$ and  $\|u x_j - Q_j u x_j\| < 4^{-j}$ for all $j\in\N$, where each  $Q_j$ denotes the canonical projection   $(\bigoplus_i Y_i)_{c_0}\to (\bigoplus_{i \in I_j} Y_i)_{c_0}$.
In particular, it follows that $(u x_j)_j$ is equivalent to the $c_0$-basis, hence the same must be true for $(x_j)_j$, which is impossible since $X$ does not contain an isomorphic copy of $c_0$.
\end{proof}

From this we deduce:

\begin{corollary}\label{into max}
 Let $(Y_i)_i$ be a sequence of operator spaces all of which are completely isomorphic to some Hilbert space endowed  with its maximal operator space structure.   Then any homogeneous Hilbertian subspace of $(\bigoplus_i Y_i)_{c_0}$ has the maximal operator space structure.
\end{corollary}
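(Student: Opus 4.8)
The plan is to combine Proposition~\ref{compact operator into} with two facts about maximal operator spaces. The first is the universal property of $\MAX$, immediate from the description recalled in Section~\ref{SectionPrelim}: every bounded linear map out of a maximal operator space is automatically completely bounded with $\cb$-norm equal to its norm; in particular $\mathrm{id}\colon\MAX(E)\to E'$ is completely contractive for any operator space structure $E'$ on the Banach space $E$, and $\MAX$ is functorial along Banach isomorphisms with no loss in the $\cb$-norm. The second, which I would record first as a preliminary observation, is: \emph{if an operator space $W$ admits a completely bounded map $v\colon W\to\MAX(\mathcal H)$ into a maximal Hilbertian operator space which is moreover an isomorphic embedding in the Banach sense, then $W$ is completely isomorphic to $\MAX(W)$.} The proof is short: since $\mathcal H$ is a Hilbert space, $v(W)$ is the range of a norm-one orthogonal projection $P$ of $\mathcal H$, so $r:=v^{-1}\circ P\colon\mathcal H\to W$ is bounded; regarding its source as $\MAX(\mathcal H)$, the universal property of $\MAX$ makes $r\colon\MAX(\mathcal H)\to\MAX(W)$ completely bounded, whence $\mathrm{id}_W=r\circ v\colon W\to\MAX(W)$ is completely bounded, and since the reverse identity is completely contractive, $W\cong_{\cb}\MAX(W)$.

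With these in hand, let $X$ be a homogeneous Hilbertian subspace of $\bigl(\bigoplus_i Y_i\bigr)_{c_0}$. Being Hilbertian, $X$ is separable, infinite-dimensional, and contains no isomorphic copy of $c_0$, so — since the inclusion $X\hookrightarrow\bigl(\bigoplus_i Y_i\bigr)_{c_0}$ is a complete isomorphic embedding — Proposition~\ref{compact operator into} guarantees some index $i_0$ and some completely bounded map $w\colon X\to Y_{i_0}$ that is not strictly singular. Fix an infinite-dimensional closed subspace $X_0\subseteq X$ on which $w$ is an isomorphic embedding, and compose with a complete isomorphism $Y_{i_0}\to\MAX(\mathcal H_{i_0})$, which exists by hypothesis; this yields a completely bounded isomorphic embedding $X_0\to\MAX(\mathcal H_{i_0})$, so the preliminary observation gives that $X_0$ is completely isomorphic to $\MAX(X_0)$.

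The remaining — and I expect the only genuinely delicate — step is to transfer maximality from $X_0$ to $X$. This cannot be done by localization alone, since the subspace $X_0$ handed to us may well have infinite codimension and there is no way to reconstruct $X$ from a single maximal summand (an infinite $c_0$-sum of maximal spaces need not be maximal). Instead I would exploit the homogeneity of $X$ together with the structure of Hilbert space: fix a Banach-space isomorphism $\iota\colon X\to\ell_2$; then $\iota(X_0)$ is a separable infinite-dimensional subspace of $\ell_2$, hence the range of a surjective linear isometry $\sigma\colon\ell_2\to\iota(X_0)$. The operator $T:=\iota^{-1}\sigma\iota$ is a bounded Banach isomorphism of $X$ onto $X_0$, and its inverse is the restriction to $X_0$ of $\iota^{-1}\sigma^{-1}Q\iota$, where $Q\colon\ell_2\to\iota(X_0)$ is the orthogonal projection; both $T$ and $\iota^{-1}\sigma^{-1}Q\iota$ are bounded operators from $X$ to $X$, so homogeneity of $X$ makes $T$ and $T^{-1}$ completely bounded. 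Thus $T$ is a complete isomorphism of $X$ onto $X_0$, and the chain $X\cong_{\cb}X_0\cong_{\cb}\MAX(X_0)\cong_{\cb}\MAX(X)$ — the last step by functoriality of $\MAX$ along the Banach isomorphism $T$ — shows that $X$ has the maximal operator space structure. The one point to watch is precisely in this last paragraph: the transfer requires that the good subspace $X_0$ can be carried onto all of $X$ by a completely bounded isomorphism, which rests on $X_0$ having the same density character as $X$; this is automatic here because "Hilbertian'' means separably infinite-dimensional, and it is exactly where both homogeneity and the Hilbertian hypothesis are genuinely used.
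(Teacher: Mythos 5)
Your proof is correct and follows essentially the same route as the paper: the contrapositive of Proposition~\ref{compact operator into} produces a non-strictly-singular c.b.\ map into some $Y_{i_0}$, homogeneity of $X$ identifies $X$ completely isomorphically with the infinite-dimensional subspace on which that map embeds, and the orthogonal-projection/universal-property argument (your preliminary observation, which is exactly the paper's remark that subspaces of $\MAX(H)$ are maximal) yields maximality. The only difference is cosmetic ordering --- the paper upgrades the subspace to all of $X$ before invoking maximality, while you prove the subspace maximal and then transfer via $\MAX$-functoriality --- so no further comment is needed.
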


\begin{proof}
If a homogeneous Hilbertian $X$ is not maximal, then any c.b.~map from $X$ into some $Y_i$ is strictly singular. Indeed, fix $i\in\N$ and let $H$ be a Hilbert space so that $Y_i$ is competely isomorphic to $\MAX(H)$. Suppose there is a c.b.\ map  $u:X\to Y_i$ which is not strictly singular; so, there is an infinite  dimensional $Z\subset X$ such that $u\restriction Z$ is an isomorphic embedding. As $X$ is homogeneous Hilbertian, $Z$ is completely isomorphic to $X$. So, there is a c.b.\ map $v:X\to \MAX(H)$ which is also an isomorphic embedding. By the maximality of $\MAX(H)$,   $v$ must be a complete isomorphic embedding. This gives us a contradiction since every subspace of $\MAX(H)$ is also maximal. 

The result now follows from  Proposition \ref{compact operator into}.
\end{proof}

\subsection{Embeddings into $c_0$-sums of a single operator space} In this subsection, we prove Theorems \ref{Thm.HomHilb.SingleSpace} and \ref{Thm.Counterexample}. The next subsection will focus on embeddings into the $c_0$-sum of a sequence of operator spaces.

\begin{proof}[Proof of Theorem \ref{Thm.HomHilb.SingleSpace}]
Let $u\colon X\to c_0(Y)$ be  a complete  isomorphic embedding and, for each $i\in\N$, let $u_i\colon X\to Y$ be the composition of $u$ with the canonical projection $c_0(Y)\to Y$ onto the $i$-th coordinate of $c_0(Y)$.  As $X$ has infinite dimension, so does  $Y$. Hence,   $\mathrm{dens}(Y)=\mathrm{dens}(c_0(Y))$, which in turn implies that  $\mathrm{dens}(X)\leq \mathrm{dens}(Y)$. Therefore, as both $X$ and $Y$ are Hilbertian, we can assume that $X\subset Y$ as vector spaces and that there is $L\geq 1$ such that \[L^{-1}\|x\|_X\leq \|x\|_Y\leq L\|x\|_X\] for all $x\in X$. Moreover, to simplify notation, we fix  infinite sets $\Gamma$ and $\Lambda$ with $\Gamma\subset \Lambda$ and assume that  $X=\ell_2(\Gamma)$ and $Y=\ell_2(\Lambda)$ as vector spaces. Let $(e_\lambda)_{\lambda\in \Lambda}$ be the standard unit basis of $\ell_2(\Lambda)$.

\begin{case1}
The formal inclusion $X\hookrightarrow Y$ is completely bounded.
\end{case1}

Fix $\theta\geq 1$ such that $Y$ is $\theta$-homogeneous. If the inverse of the inclusion $X\hookrightarrow Y$ is also completely bounded, then $X\hookrightarrow Y$  is a complete isomorphic embedding and the  conclusion follows. If not,  there are $k\in\N$ and matrices $(a_\gamma)_{\gamma\in \Gamma}$ in $\M_k$ such that
\begin{itemize}
\item  $\|\sum_{\gamma\in \Gamma} a_\gamma \otimes e_
\gamma\|_{\M_k(Y)}=1$ and
\item  $\|\sum_{\gamma\in \Gamma} a_\gamma \otimes e_\gamma\|_{\M_k(X)}> \theta L\|u\|\|u^{-1}\|_{\cb}$. 
\end{itemize}

For each $i\in \N$, let $v_i\colon Y\to Y$ be the linear map defined by 
\[v_i(e_\lambda)=\left\{\begin{array}{ll}
u_i(e_\lambda),& \text{ if } \lambda\in \Gamma\\
0, &\text{ if } \lambda\in \Lambda\setminus \Gamma.
\end{array}\right.\]
 Clearly, $\|v_i\|\leq L\|u_i\|\leq L\|u\|$ for all $i\in\N$. By our choice of $\theta$, we have that \[\sup_{i\in \N}\|v_i\|_{\cb}<\theta L \|u\|.\] Therefore, it follows that
\begin{align*}
    \Big\|\sum_{\gamma\in \Gamma} a_\gamma \otimes u(e_\gamma)\Big\|_{\M_k(c_0(Y))} &= \sup_{i\in\N} \Big\|\sum_{\gamma\in \Gamma} a_\gamma \otimes u_i(e_\gamma)\Big\|_{\M_k(Y)}  \\
    &= \sup_{i\in\N} \Big\|\sum_{\gamma\in \Gamma} a_\gamma \otimes v_i(e_\gamma)\Big\|_{\M_k(Y)} \\
    &\leq \sup_{i\in \N} \|v_i\|_{\cb} \Big\|\sum_{\gamma\in \Gamma} a_\gamma \otimes e_\gamma\Big\|_{\M_k(Y)}\\
    &\leq \theta L\|u\|.
\end{align*}  However, as    \[\Big\|\sum_{\gamma\in \Gamma} a_\gamma \otimes e_\gamma\Big\|_{\M_k(X)}\leq \|u^{-1}\|_{\cb}\Big\|\sum_{\gamma\in \Gamma} a_\gamma \otimes u(e_\gamma)\Big\|_{\M_k(c_0(Y))} ,\] this contradicts our choice of $(a_\gamma)_{\gamma\in \Gamma}$.

\begin{case2}
Case 1 does not hold.
\end{case2}

We start by noticing that for each $i\in\N$ and each finite codimensional subspace $Z\subset X$, the restriction  $u_i\restriction _Z\colon Z\to u_i(Z)$ is not an isomorphism. Indeed, suppose this is not the case and fix offenders, say $i\in\N$ and $Z\subset X$. As $X$ is a homogeneous Hilbertian space and $Z$ has finite codimension in $X$, there is a complete isomorphism $v\colon X\to Z$. Clearly, the basic sequences $(u_i(v(e_\gamma)))_{\gamma\in \Gamma}$ and $(e_\gamma)_{\gamma\in \Gamma}$ are in $Y$, hence, as $Y$ is a homogeneous Hilbertian space there is a complete isomorphic embedding  $w\colon   u_i(Z)\to Y$ such that $ w(u_i(v(e_\gamma)))=e_\gamma$, for all $\gamma\in \Gamma$.   Since \[ w\circ u_i\circ v \colon X\to Y \] is the inclusion, this inclusion  must be completely bounded. This is a  contradiction since we assume Case 1 does not hold. 

Since $u_i\restriction _Z\colon Z\to u_i(Z)$ is not an isomorphism for all $i\in\N$ and all finite codimensional $Z\subset X$,   a standard gliding hump argument from Banach space theory gives that the basis of $\ell_2$ and $c_0$ are equivalent; contradiction.
\end{proof}

 The next result shows that homogeneity is necessary for Theorem \ref{Thm.HomHilb.SingleSpace} to hold.

 \begin{proof}[Proof of Theorem \ref{Thm.Counterexample} ]
 Let $Y=\MIN(\ell_2)\oplus\MAX(\ell_2)$ and fix a partition   $(S_k)_k$ of $\N$ into finite subsets with $\lim_k|S_k|=\infty$. Let $(e_i)_i$ be the canonical unit basis of $\ell_2$ and, given $x=\sum_ia_ie_i\in \ell_2$ and $k\in \N$, we let \[x\restriction _{S_k}=\sum_{i\in S_k}a_ie_i\in \ell_2(S_k).\] We then let $X$ be the operator space consisting of $\ell_2$ with the operator space structure given by the isometric embedding
 \[x\in \ell_2\mapsto (x, (x\restriction_{S_k})_k)\in \MIN(\ell_2)\oplus\Big(\bigoplus_k \MAX(\ell_2(S_k))\Big)_{c_0}.\]
Since $\MIN(\ell_2)\oplus(\bigoplus_k \MAX(\ell_2(S_k)))_{c_0}$ completely isometrically embeds into $c_0(Y)$,  it is clear that  $X$  completely isometrically embeds into $c_0(Y)$. 
 
 We are  left to notice that  $X$ does not completely isomorphically embed into $Y$. Suppose for a contradiction that such embedding $u\colon   X\to Y$ exists. Let $p\colon  Y\to \MIN(\ell_2)$ and $q\colon Y\to \MAX(\ell_2)$ denote the canonical projections. For each $k\in\N$, let \[X_k=\mathrm{span}\{e_i\in X\mid i\in S_k\}.\] So, $X_k=\MAX(\ell_2(S_k))$ completely isometrically. For each $k\in \N$, let $v_k=p\circ u\restriction_{X_k}$ and let \[v_{k,k}\colon \M_{1,k}(X_k)\to \M_{1,k}(\MIN(\ell_2))\] be its $1$-by-$k$ amplification.\footnote{If $E$ is an operator space, then $\M_{1,k}(E)$ denotes the subspace of $\M_k(E)$ consisting of all operators whose only nonzero rows are their first one. The space $\M_{k,1}(E)$ is defined similarly, but with the word ``columns'' substituting ``rows''. }

\begin{claim}
The maps $(v_{k,k})_k$ are  not  equi-isomorphisms. 
\end{claim}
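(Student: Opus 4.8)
The plan is to argue by contradiction, exploiting the fact that $\MAX(\ell_2(S_k))$ is genuinely ``fatter'' than $\MIN(\ell_2)$ in the matrix norms, while the sizes $|S_k|$ tend to infinity. Suppose $(v_{k,k})_k$ were equi-isomorphisms, say with $\sup_k \|v_{k,k}\| \leq M$ and $\sup_k \|v_{k,k}^{-1}\| \leq M$. First I would record that $v_k \colon X_k \to \MIN(\ell_2)$ is an isomorphic embedding whose inverse is uniformly bounded: indeed, since $u$ is a complete isomorphic embedding and $q \circ u \restriction_{X_k}$ lands in $\MAX(\ell_2)$ which has no subspace isomorphic to anything but a Hilbertian space, I would separately control $\|q \circ u\restriction_{X_k}\|$; but in fact the cleaner route is to observe that the $1$-by-$k$ amplification already forces a contradiction. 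The key point is the well-known estimate quantifying how far $\MIN(\ell_2(S_k))$ is from $\MAX(\ell_2(S_k))$: if $(f_i)_{i \in S_k}$ is the canonical basis, then for a row $[f_1, \dots, f_{|S_k|}] \in \M_{1,|S_k|}(\MAX(\ell_2(S_k)))$ one has norm $|S_k|^{1/2}$, whereas in $\M_{1,|S_k|}(\MIN(\ell_2))$ the same row has norm $O(1)$ (more precisely, the row $[e_1, \ldots, e_n]$ in $\M_{1,n}(\MIN(\ell_2))$ has norm comparable to a constant, since $\MIN$ of a Hilbert space has the ``row/column'' norms controlled by the $c_0$-type behavior of the minimal structure). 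Thus evaluating $v_{k,k}$ on the identity-like element $\sum_{i \in S_k} e_{1i} \otimes e_i \in \M_{1,|S_k|}(X_k)$ gives a vector of norm $\sim |S_k|^{1/2}$ on the domain side, but the image has norm at most $M$ times the corresponding norm in $\M_{1,|S_k|}(\MIN(\ell_2))$, which is $O(M)$ — contradicting $\|v_{k,k}^{-1}\| \leq M$ once $|S_k|$ is large enough.

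More carefully, the steps I would carry out are: (1) assume $\sup_k\|v_{k,k}\|, \sup_k \|v_{k,k}^{-1}\| \leq M < \infty$; (2) fix $k$ and let $R_k = \sum_{i \in S_k} e_{1i} \otimes e_i$, viewed as an element of $\M_{1,|S_k|}(X_k)$ — since $X_k = \MAX(\ell_2(S_k))$ completely isometrically and since in $\MAX(H)$ the norm of a row of an orthonormal family is its Hilbertian (Euclidean) length, $\|R_k\|_{\M_{1,|S_k|}(X_k)} = |S_k|^{1/2}$; (3) apply $v_{k,k}$: $v_{k,k}(R_k) = \sum_{i \in S_k} e_{1i} \otimes v_k(e_i)$, an element of $\M_{1,|S_k|}(\MIN(\ell_2))$; (4) bound $\|v_{k,k}(R_k)\|_{\M_{1,|S_k|}(\MIN(\ell_2))}$ from above using that $\MIN(\ell_2)$'s matrix structure is the minimal one and the vectors $v_k(e_i)$ are uniformly bounded in $\ell_2$-norm (say by $\|u\|$), so that this row norm is at most a universal constant times $\|u\|$ (using that in $\MIN$ of any space, a finite row $[y_1, \dots, y_n]$ has $\M_{1,n}$-norm $= \sup\{\|\sum \alpha_i y_i\| : \sum |\alpha_i|^2 \leq 1\}$ — wait, that is not bounded — I should instead use the correct $\MIN$ row-norm formula, which is $\sup\{|\langle \sum \beta_i y_i, \varphi\rangle|\}$-type and gives at most $(\sum\|y_i\|^2)^{1/2}$, hence $\leq |S_k|^{1/2}\|u\|$; this is not yet a contradiction, so I instead use the finer fact that the $\MIN$–$\MAX$ row discrepancy for $n$ vectors is of order $\sqrt{n}$ only in one direction). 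The decisive inequality is then $\|v_{k,k}^{-1}\| \geq \|R_k\| / \|v_{k,k}(R_k)\|$; to make the right side blow up I need the \emph{lower} bound $\|R_k\|_{\M_{1,|S_k|}(X_k)} = |S_k|^{1/2}$ against an \emph{upper} bound on $\|v_{k,k}(R_k)\|$ that grows strictly slower than $|S_k|^{1/2}$.

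The main obstacle — and the step I expect to require the most care — is precisely pinning down that upper bound on $\|v_{k,k}(R_k)\|_{\M_{1,|S_k|}(\MIN(\ell_2))}$, i.e., showing it is $o(|S_k|^{1/2})$, or replacing the test element $R_k$ by a better one. The cleanest fix is to run the argument through the identity operator: the claim is really that $\mathrm{Id} \colon \MAX(\ell_2(S_k)) \to \MIN(\ell_2(S_k))$ has $\cb$-norm growing like $|S_k|^{1/2}$ (this is standard, e.g.\ from $\|\mathrm{Id}: \MAX(H_n) \to \MIN(H_n)\|_{\cb} \sim n^{1/2}$ for $n$-dimensional $H_n$; see Pisier's book). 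Since $v_k$ factors an isomorphic embedding of $X_k$ followed by the inclusion into $\MIN(\ell_2)$, if the $v_{k,k}$ were uniformly invertible then $\mathrm{Id}: \MAX(\ell_2(S_k)) \to \MIN(\ell_2(S_k))$ would have $\cb$-norm (indeed even $1$-by-$|S_k|$ amplification norm) bounded by a constant multiple of $M^2 \|u\|\|u^{-1}\|_{\cb}$, uniformly in $k$ — contradicting $|S_k| \to \infty$. So the real content is: reduce the Claim to the standard growth rate of $\|\mathrm{Id}: \MAX \to \MIN\|$ on finite-dimensional Hilbert spaces, and then invoke $\lim_k |S_k| = \infty$. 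I would write this reduction explicitly, citing \cite{Pisier-OS-book} for the $\sqrt{n}$ growth, and conclude.
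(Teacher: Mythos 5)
Your final plan follows the same route as the paper --- reduce to the fact that $\MIN(\ell_2(S_k))$ and $\MAX(\ell_2(S_k))$ are far apart already on row amplifications --- but as written it has a genuine gap, plus a mis-stated key fact. The standard fact is quoted backwards: $\mathrm{Id}\colon \MAX(\ell_2^n)\to\MIN(\ell_2^n)$ is completely \emph{contractive}; what grows is its inverse, $\mathrm{Id}\colon\MIN(\ell_2^n)\to\MAX(\ell_2^n)$, whose $1$-by-$n$ amplification already has norm $\sqrt n$ (rows over $\MAX$ dominate rows over $R$, where an orthonormal row has norm $\sqrt n$, while the same row over $\MIN$ has norm $1$). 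More seriously, the implication ``if the $v_{k,k}$ are uniformly invertible, then the identity between $\MAX(\ell_2(S_k))$ and $\MIN(\ell_2(S_k))$ is uniformly controlled'' is precisely the content of the claim, and you assert it rather than prove it: $v_k(X_k)$ is just some subspace of $\MIN(\ell_2)$, so to produce the coordinate identity you must compose $v_k$ with the straightening map $w_k\colon v_k(X_k)\to\MIN(\ell_2(S_k))$, $v_k(e_i)\mapsto e_i$, and you must know that $\|w_k\|_{\cb}\le \|v_k^{-1}\|\le M$; that is exactly where the homogeneity/minimality of $\MIN(\ell_2)$ (bounded implies completely bounded into a minimal space) enters, and it is the step your sketch omits --- it is also exactly how the paper argues. (A minor further point: the claim concerns the $1$-by-$k$ amplifications $v_{k,k}$, so your test rows should have length $\min(k,|S_k|)$ rather than $|S_k|$.)

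Ironically, the direct argument you abandoned does work, and the objection that derailed it was mistaken: the first formula you wrote for the $\MIN$ row norm is correct, namely $\|[y_1,\dots,y_m]\|_{\M_{1,m}(\MIN(\ell_2))}=\sup\{\|\sum_i\alpha_i y_i\|\st \sum_i|\alpha_i|^2\le1\}$, and with $y_i=v_k(e_i)$ for an orthonormal family $(e_i)_{i}$ this equals the operator norm of $v_k$ restricted to their span, hence is at most $\|p\|\,\|u\|$ uniformly in $k$ --- no orthogonality or smallness of the vectors $v_k(e_i)$ is needed. Against the lower bound $\|[e_{i_1},\dots,e_{i_m}]\|_{\M_{1,m}(X_k)}\ge\sqrt m$ (rows over $\MAX$ carry the $R$-norm), with $m=\min(k,|S_k|)\to\infty$, this yields $\|v_{k,k}^{-1}\|\ge \sqrt m/(\|p\|\,\|u\|)\to\infty$ outright, proving the claim with no contradiction scaffolding and no appeal to homogeneity --- in fact a shorter route than the paper's $w_k$-argument. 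As submitted, however, your proposal neither completes this direct route (you talk yourself out of the correct formula) nor supplies the key transfer step of the reduction, so it does not yet constitute a proof.
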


\begin{proof}
If $(v_{k,k})_k$ are equi-isomorphisms, then so are $(v_{k})_k$. For each $k\in\N$, let  $w_k\colon   v_k(X_k)\to \MIN(\ell_2(S_k))$ be the linear map defined by $w_k(v_k(e_i))=e_i$ for all $i\in S_k$. As $\MIN(\ell_2)$ is homogeneous and $(v_k)_k$ are equi-isomorphisms,   $(w_k)_k$ are equi-complete isomorphisms. Therefore, letting \[w_{k,k}\colon  \M_{1,k}(v_k(X_k))\to \M_{1,k}( \MIN(\ell_2(S_k)))\] be the $1$-by-$k$ amplification of $w_k$, we obtain that the maps $(w_{k,k}\circ v_{k,k})_k$ are equi-isomorphisms.  However, $w_{k,k}\circ v_{k,k}$ is precisely the identity  \[\M_{1,k}( \MAX(\ell_2(S_k)))\to \M_{1,k}( \MIN(\ell_2(S_k))).\] This is a contradiction since those identities are not equi-isomorphisms. 
\end{proof}

As $(v_{k,k})_k$ are equi-completely bounded,   the previous claim implies that  there are a sequence $(n_k)_k$ in $\N$ and a sequence $(x_k)_k$ such that \begin{itemize}
\item $x_k\in \M_{1,n_k}(X_{n_k})$ and  $\|x_k\|_{\M_{1,n_k}(X_{n_k})}=1$, and

\item  $\delta=\inf_k\|q(u(x_k))\|_{\M_{1,n_k}(\MAX(\ell_2))}>0$.
\end{itemize}
Therefore, since  the $1$-by-$k$ amplifications of the identity $\MAX(\ell_2)\to R$ are isometries, by letting    $N_k=n_1+\ldots +n_k$, we have that
\[\|\begin{bmatrix}q(u(x_1))& \ldots &q(u(x_k))
\end{bmatrix}\|_{\M_{N_k}(\MAX(\ell_2))}\geq \delta k^{1/2}\]
for all $k\in\N$. On the other hand, as each $x_k$ is in $X_k$, it follows that
\[\|\begin{bmatrix}x_1& \ldots &x_k
\end{bmatrix}\|_{\M_{N_k}(X)}=\|\begin{bmatrix}x_1& \ldots &x_k
\end{bmatrix}\|_{\M_{N_k}(\MIN(\ell_2))}=1\]
for all $k\in\N$. It then follows that $\delta k^{1/2}\leq \|u\|_{\cb}$ for all $k\in\N$;
 contradiction. 
 \end{proof}
  
  \subsection{Embeddings into the $c_0$-sum of a sequence of operator spaces}
We now move to study what happens with Theorems \ref{Thm.HomHilb.SingleSpace} and \ref{Thm.Counterexample} if one replaces $c_0(Y)$ by $(\bigoplus_{n\in\N} Y_n)_{c_0}$.

\begin{proof}[Proof of Theorem \ref{Thm.Prop.HomHilb.MultipleSpace}]
For each $m\in\N$, let $R^{[m]}$ be $\ell_2$ as a Banach space, equipped with the operator space structure given by 
\[
\|x\|_{B(\ell_2) \otimes R^{[m]}} = \sup_p \|(I \otimes P)x\|_{B(\ell_2) \otimes R} ,
\]
where the supremum runs over all projections of rank $m$. For each $n\in\N$, let $Y_n=R^{[2^n]}$. So,    $(Y_n)_n$ are all completely isomorphic to each other, and to $\MIN(\ell_2)$ (although the isomorphism constants are not uniformly bounded). Moreover, since $R$ is $1$-homogeneous, it is clear that each $Y_n$ is also $1$-homogeneous. 

Define $X$ as the image of $\ell_2$ under the isometry  $u \colon  \ell_2 \to (\bigoplus_n Y_n)_{c_0} $ given by $u( \xi)= (\xi/n)_{n=1}^\infty$ for all $\xi\in \ell_2$.  So, $X$ is clearly Hilbertian and, as each $Y_n$ is $1$-homogeneous, $X$ is also $1$-homogeneous.  We are left to notice that   $X$ does not completely isomorphically embeds into  $\MIN(\ell_2)$. Since $\MIN(\ell_2)$ is homogeneous, it is enough to show that the identity $X\to \MIN(\ell_2)$ is not a complete isomorphism. For that, let $(e_n)_n$ be the canonical basis of $\ell_2$ and for each $n\in\N$ let 
\[x_n=\begin{bmatrix}
e_1&\ldots &e_{2^n}
\end{bmatrix}\in \M_{1,2^n}(\MIN(\ell_2)).\]
So, $\|x_n\|_{\M_{1,2^n}(\MIN(\ell_2))}=1$ but $
\|x_n\|_{\M_{1,2^n}(X)} \geq   2^{n/2}/n $ for all $n\in\N$.
\end{proof}

 We are left to prove Theorem \ref{Thm.fix problem}. For that, we will now we turn our attention from embeddings to quotients. But first, we recall a definition:   an operator $T :X\to Y$ between Banach spaces  is called \emph{strictly cosingular} if for all   infinite dimensional Banach spaces $Y_0$ and all   operators $q : Y \to Y_0$, we have that  $q T$ not surjective. Strictly cosingular operators form an ideal; see e.g.~\cite[Section 3.4]{Aiena} for more information.

\begin{proposition}\label{no quotient cosingular}
Suppose $X$ and $(Y_i)_{i\in\N}$   are operator spaces so $\dim(X)=\infty$ and  any c.b.~map from $Y_i$ to $X$, $i\in\N$, is strictly cosingular. If there exists a c.b.~surjection from $(\bigoplus Y_i)_{c_0}$ onto $X$, then $X$ contains a copy of $\MIN(c_0)$.
\end{proposition}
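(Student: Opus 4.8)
The plan is to reduce the statement to finding, inside $Z:=(\bigoplus_iY_i)_{c_0}$, a normalized sequence $(z_k)_k$ with pairwise disjoint \emph{finite} supports on whose closed linear span the surjection $q$ is bounded below (in the Banach sense). This reduction is essentially automatic on the operator-space side. If $(z_k)_k$ is normalized with disjoint supports, then for any $a_1,\dots,a_N\in\M_m$ both $\|\sum_k a_k\otimes z_k\|_{\M_m(Z)}$ and $\|\sum_k a_k\otimes e_k\|_{\M_m(\MIN(c_0))}$ collapse to $\sup_k\|a_k\|_{\M_m}$ — using that $\|a\otimes y\|_{\M_m(Y_i)}=\|a\|_{\M_m}\|y\|_{Y_i}$ and that the supports are disjoint — so $E:=\overline{\spn}\{z_k:k\in\N\}$ is completely isometric to $\MIN(c_0)$. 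The restriction $q|_E\colon E\to X$ is completely bounded; if it is bounded below it is a Banach isomorphism onto a closed subspace $q(E)\subseteq X$, and since $E\cong\MIN(c_0)$ is $1$-minimal the inverse map $q(E)\to E$ is automatically completely bounded. Hence $q|_E$ is a complete isomorphism and $X$ contains a copy of $\MIN(c_0)$.

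Next I would extract the one quantitative fact needed to run a gliding hump. Put $Z_n=(\bigoplus_{i>n}Y_i)_{c_0}$ and let $P_{\le n},P_{(n,\infty)}$ be the canonical norm-one projections on $Z$, so that $\|q-q\circ P_{\le n}\|=\|q|_{Z_n}\|$. Writing $q_i=q\circ\iota_i\colon Y_i\to X$ (with $\iota_i$ the inclusion and $\pi_i$ the $i$-th coordinate projection), we have $q\circ P_{\le n}=\sum_{i=1}^n q_i\circ\pi_i$; each $q_i$ is strictly cosingular by hypothesis, and since the strictly cosingular operators form a norm-closed ideal, each $q\circ P_{\le n}$ is strictly cosingular. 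If $\|q|_{Z_n}\|\to 0$, then $q$ would be a norm limit of strictly cosingular operators, hence strictly cosingular — impossible, since $q$ is a surjection onto the infinite-dimensional space $X$ (apply the definition of strict cosingularity with $\mathrm{id}_X$). As $(\|q|_{Z_n}\|)_n$ is non-increasing, this gives $\delta:=\inf_n\|q|_{Z_n}\|>0$.

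Then I would carry out the gliding hump in the spirit of the proof of Proposition~\ref{compact operator into}, but from the range side. Using $\|q|_{Z_n}\|\ge\delta$ repeatedly and truncating the chosen vectors to finitely many coordinates (the truncation error can be made arbitrarily small inside a $c_0$-sum), one produces a normalized sequence $(z_k)_k$ in $Z$ with pairwise disjoint finite supports and $\|qz_k\|\ge\delta/3$ for all $k$. Because the $z_k$ are normalized with disjoint supports, $(z_k)_k$ is weakly null in $Z$ (test against an element of $Z^*=(\bigoplus_iY_i^*)_{\ell_1}$), hence $(qz_k)_k$ is weakly null in $X$ by weak--weak continuity of $q$, and it is seminormalized since $\delta/3\le\|qz_k\|\le\|q\|$. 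By the Bessaga--Pełczyński selection principle some subsequence $(qz_{k_j})_j$ is basic with basis constant at most $2$. Since $\|\sum_j a_jqz_{k_j}\|=\|q(\sum_j a_jz_{k_j})\|\le\|q\|\sup_j|a_j|$, while basicness gives $\|\sum_j a_jqz_{k_j}\|\ge\frac14\sup_j|a_j|\,\|qz_{k_j}\|\ge\frac{\delta}{12}\sup_j|a_j|$, the sequence $(qz_{k_j})_j$ is equivalent to the $c_0$-basis; equivalently, $q$ is bounded below on $\overline{\spn}\{z_{k_j}:j\in\N\}$, and the first paragraph finishes the proof.

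The step I expect to need the most care is the second paragraph: it is exactly here that the hypothesis is used, through the \emph{closedness} of the strictly cosingular ideal together with the fact that no surjection onto an infinite-dimensional space can be strictly cosingular, which is what prevents $q$ from becoming small on the tails of the $c_0$-sum. Once the uniform lower bound $\|q|_{Z_n}\|\ge\delta$ is available, the remainder is the classical observation that a seminormalized weakly null sequence carrying an upper $c_0$-estimate contains a copy of the $c_0$-basis, combined with the two elementary operator-space remarks of the first paragraph (disjointly supported blocks in a $c_0$-sum span $\MIN(c_0)$, and bounded maps into a minimal operator space are completely bounded).
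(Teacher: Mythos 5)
Your proposal is correct and follows essentially the same route as the paper: ruling out $\|q|_{\text{tail}}\|\to 0$ via strict cosingularity of the finite head pieces, a gliding hump producing disjointly supported normalized blocks $(z_k)$ with $\|qz_k\|$ bounded below, weak nullness plus a basic subsequence, and upper/lower $c_0$-estimates yielding a copy of $\MIN(c_0)$ in $X$. The only (harmless) variations are that you invoke norm-closedness of the strictly cosingular ideal where the paper uses its perturbation lemma for surjections, and you conclude the complete isomorphism abstractly from the $1$-minimality of the span of the $z_k$'s upstairs, whereas the paper verifies the matrix-norm lower bound directly with biorthogonal functionals and the fact that rank-one maps have $\cb$-norm equal to norm.
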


Before proving Proposition \ref{no quotient cosingular}, we need a lemma. The following easy lemma is known to experts, but we provide a proof of it for the readers convinience. 

\begin{lemma}\label{l:perturb surjection}
For every surjection $T : F \to E$ between Banach spaces, there exists $\delta > 0$ so that $T + S$ is surjective for any operator $S :F\to E$  with $\|S\| < \delta$.
\end{lemma}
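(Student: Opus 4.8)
The plan is to use the standard open mapping / perturbation argument. Since $T : F \to E$ is a surjection between Banach spaces, the open mapping theorem gives a constant $c > 0$ such that $T(B_F) \supseteq c B_E$; equivalently, for every $y \in E$ there exists $x \in F$ with $Tx = y$ and $\|x\| \leq c^{-1}\|y\|$. I will show that $\delta = c/2$ works. So fix an operator $S : F \to E$ with $\|S\| < \delta$ and fix $y \in E$; I must produce $x \in F$ with $(T+S)x = y$.

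The key step is a geometric-series (successive approximation) construction. Build a sequence $(x_k)_{k \geq 0}$ in $F$ recursively: choose $x_0$ with $Tx_0 = y$ and $\|x_0\| \leq c^{-1}\|y\|$; having chosen $x_0, \dots, x_{k}$, set $r_k = y - (T+S)\bigl(\sum_{j=0}^{k} x_j\bigr)$ and choose $x_{k+1}$ with $Tx_{k+1} = r_k$ and $\|x_{k+1}\| \leq c^{-1}\|r_k\|$. The point is that $r_k = -S x_k$: indeed $r_0 = y - (T+S)x_0 = y - Tx_0 - Sx_0 = -Sx_0$, and inductively $r_{k} = r_{k-1} - (T+S)x_{k} = r_{k-1} - Tx_{k} - Sx_{k} = -Sx_k$, using $Tx_k = r_{k-1}$. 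Hence $\|x_{k+1}\| \leq c^{-1}\|S x_k\| \leq c^{-1}\|S\| \|x_k\| \leq (1/2)\|x_k\|$, so $\|x_k\| \leq 2^{-k}\|x_0\|$ and the series $x = \sum_{k \geq 0} x_k$ converges absolutely in the Banach space $F$. Passing to the limit in $r_k = y - (T+S)\bigl(\sum_{j=0}^{k} x_j\bigr)$, and noting $\|r_k\| = \|S x_k\| \to 0$, we get $(T+S)x = y$. Since $y$ was arbitrary, $T+S$ is surjective.

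There is no real obstacle here; the only thing to be slightly careful about is that $T+S$ need not be injective, so one works entirely with the surjectivity estimate $T(B_F) \supseteq cB_E$ rather than with a right inverse, and the recursion corrects the residual at each stage. This is exactly the classical proof that the set of surjective operators is open, and it requires only completeness of $F$.
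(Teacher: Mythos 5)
Your proof is correct, and it takes a genuinely different route from the one in the paper. The paper argues by duality: a bounded operator is surjective if and only if its adjoint is bounded below, so one picks $\delta$ with $\|T^*e^*\| \geq \delta\|e^*\|$ for all $e^* \in E^*$ and observes, via the triangle inequality, that $(T+S)^*$ is still bounded below when $\|S\| < \delta$. You instead run the classical successive-approximation argument directly on the primal side: the open mapping theorem gives the lifting constant $c$, and the recursion with residuals $r_k = -Sx_k$ produces an absolutely convergent series solving $(T+S)x = y$. Your computation is sound (the identity $r_k = -Sx_k$ is verified correctly, and $C\|S\| < 1/2$ with $C = c^{-1}$ gives the geometric decay), and your argument has the added benefit of being constructive and quantitative: it shows the perturbed map satisfies $(T+S)(2c^{-1}\|y\| \cdot B_F) \ni y$, i.e.\ it is open with an explicit constant, whereas the paper's proof only certifies surjectivity. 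One small caveat: your closing remark that the argument ``requires only completeness of $F$'' is accurate only after the constant $c$ is in hand; extracting $c$ from surjectivity via the open mapping theorem uses completeness of $E$ as well (Baire category in the target), which is of course part of the hypothesis here. The paper's duality proof is shorter, but it leans on the equivalence ``surjective $\Leftrightarrow$ adjoint bounded below,'' which is itself a closed-range-type consequence of the same circle of ideas you use explicitly.
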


\begin{proof}
It is well known that an operator $U$ is surjective if and only if its adjoint is bounded below. Thus, we can find $\delta > 0$ so that $\|T^* e^*\| \geq \delta \|e^*\|$ for any $e^* \in E^*$. If $\|S\| < \delta$, then
$$
\|(T+S)^* e^*\| \geq \|T^* e^*\| - \|S\| \|e^*\| \geq (\delta - \|S\|) \|e^*\| 
$$
holds for any $e^* \in E^*$, showing the surjectivity of $T+S$.
\end{proof}

\begin{proof}[Proof of Proposition \ref{no quotient cosingular}]
For the sake of convenience, write $Z = (\bigoplus_i Y_i)_{c_0}$ and, for each $n \in \N$, let $Z_n = \bigoplus_{i \leq n} Y_i$ and $Z^n = \bigoplus_{i > n} Y_i$; as usual, we view $Z^n$ and $Z^n$ as  subspaces of $Z$ in the canonical way. Suppose $u : Z \to X$ is a completely bounded surjection. Then $\inf_n \|u|_{Z^n}\| > 0$. Indeed, otherwise, by Lemma \ref{l:perturb surjection}, $u - u\restriction_{Z^n} = u\restriction_{Z_n}$ is surjective for $n$ large enough. However, being a finite sum of strictly consingular operators, $u\restriction_{Z_n}$ is also strictly cosingular and, in particular, it cannot be surjective since $\dim(X)=\infty$.

A gliding hump argument then produces an increasing sequence $(n_j)_j$ of naturals and a sequence $(z_j)_j$ of normalized vectors such that   $z_j \in \bigoplus_{n_{j-1} < i \leq n_j} Y_i$, for all $j\in\N$, and  $\inf_j \|u z_j\| > 0$. Clearly, $(z_j)_j$ is weakly null, hence so is $(u z_j)_j$. By \cite[Proposition 1.5.4]{AlbiacKaltonBook}, going to a subsequence if necessary, we can assume that $(u z_j)_j$ is a basic sequence.  For simplicity of notation, let $x_j=uz_j$ for all $j\in\N$.

For any $(a_k)_k \subset B(H)$, we have \[\Big\|\sum_k a_k \otimes z_{j_k}\Big\|_{B(H) \otimes Z} \leq \sup_k \|a_k\|.\] Therefore, we have that  \[\Big\|\sum_k a_k \otimes x_k\Big\|_{B(H) \otimes X} \leq \|u\|_{\cb} \sup_k \|a_k\|.\] On the other hand, since $(x_k)_k$ is a basic sequence, one can find functionals $x_k^* \in X^*$, biorthogonal to $x_k$'s\footnote{I.e., $x_k^*(x_k)=1$ and $x^*_k(x_j)=0$ for all $k\neq j$.} and such that  $\sup_k \|x_k^*\| < \infty$. As for rank one operators the operator and c.b.~norms coincide, we have that 
\[\|a_m\|   \leq \|x_m^*\| \Big\|\sum_k a_k \otimes x_k\Big\|_{B(H) \otimes X}\]
for all $m\in\N$. Thus, $(x_k)_{k \in \N}$ spans a copy of $\MIN(c_0)$ in $X$.
\end{proof}

Note that, under the hypothesis of Proposition \ref{no quotient cosingular}, $X$ cannot embed completely complementably in $(\bigoplus_i Y_i)_{c_0}$. Moreover, we have:

\begin{corollary}\label{c:row_col_doesnt_embed}
Suppose $X_0$ is either $R$ or $C$, and $X$ is a Hilbertian operator space for which there exists a completely bounded surjection from $X$ onto $X_0$. 
Further, let $(Y_i)_{i\in\N}$ be   operator spaces   such that any completely bounded operator from $Y_i$ to $X_0$, $i\in\N$, is strictly cosingular. Then $X$ does not completely isomorphically embed into $(\bigoplus_i Y_i)_{c_0}$.
\end{corollary}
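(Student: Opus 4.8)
The plan is to reduce Corollary \ref{c:row_col_doesnt_embed} to Proposition \ref{no quotient cosingular} by a duality argument. Suppose for contradiction that $X$ completely isomorphically embeds into $Z = (\bigoplus_i Y_i)_{c_0}$, via a complete isomorphic embedding $j\colon X \to Z$. We are also given a completely bounded surjection $\pi\colon X \to X_0$, where $X_0 \in \{R, C\}$. The idea is to dualize: the adjoint $j^*\colon Z^* \to X^*$ is a completely bounded surjection (since $j$ is a complete isomorphic embedding, $j^*$ is a complete quotient map up to constants), and composing with $\pi^*\colon X_0^* \to X^*$ — wait, that goes the wrong way. Instead, I should dualize more carefully: from $\pi\colon X \to X_0$ a c.b.\ surjection, we do \emph{not} immediately get a map into $X_0$ from $Z$. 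So the cleaner route is to observe that $X_0 = R$ has $X_0^* = C$ (and $C^* = R$) completely isometrically, and that $R, C$ are reflexive as Banach spaces, so one can pass to adjoints freely.

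Here is the structure I would follow. First, since $j\colon X \to Z$ is a complete isomorphic embedding and $Z = c_0$-sum, the range $j(X)$ is a completely complemented-or-not subspace; we don't need complementation. The key point: $Z^* = (\bigoplus_i Y_i^*)_{\ell_1}$ completely isometrically, and $j^*\colon Z^* \to X^*$ is a complete \emph{surjection} (bounded below adjoint $\Leftrightarrow$ the original is a quotient; here $j$ is an embedding so $j^*$ is onto, with the c.b.\ quotient estimate coming from $j$ being a \emph{complete} embedding). Next, compose: $\pi\circ ( \text{something})$ — actually the right composition is to take $\pi^*\colon X_0^* \to X^*$, which is a complete isomorphic embedding (adjoint of a surjection is an embedding, and completely so since $\pi$ is c.b.\ and $X_0$ is e.g.\ homogeneous so adjoints behave well), but we want a surjection \emph{onto} $X_0^*$, not an embedding into $X^*$. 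So instead: we have a c.b.\ surjection $\pi\colon X\to X_0$; precompose nothing. What we need for Proposition \ref{no quotient cosingular} applied in the dual is a c.b.\ surjection from $(\bigoplus_i Y_i^*)_{\ell_1}$ onto $X_0^*$ — but Proposition \ref{no quotient cosingular} is about $c_0$-sums, not $\ell_1$-sums.

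So the duality must be set up the other way. The correct reduction: we want to rule out a c.b.\ embedding $j\colon X\to Z$. Dualize to get $j^*\colon Z^* = (\bigoplus_i Y_i^*)_{\ell_1} \to X^*$ a c.b.\ surjection. Now I want to feed this into a version of Proposition \ref{no quotient cosingular} for $\ell_1$-sums, or — better — compose with $\pi^*$ to land in $X_0^*$. From $\pi\colon X \to X_0$ c.b.\ surjection, $\pi^*\colon X_0^* \to X^*$ is a c.b.\ embedding, so it does \emph{not} compose after $j^*$ to give a surjection onto $X_0^*$. The resolution is that we don't need $X_0^*$; we need $X_0$ itself, and the hypothesis on $Y_i$ is about maps \emph{into} $X_0$. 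So reconsider once more: apply Proposition \ref{no quotient cosingular} directly with the $Y_i$ as given and with the role of ``$X$'' played by $X_0$. We need a c.b.\ surjection from $(\bigoplus_i Y_i)_{c_0}$ onto $X_0$. We have $j\colon X\to Z$ embedding and $\pi\colon X\to X_0$ surjection; these don't obviously compose. But: if $j$ is a complete \emph{isomorphic} embedding onto its image $j(X)\subset Z$, and $j(X)$ is completely complemented in $Z$ by some c.b.\ projection $P\colon Z\to j(X)$, then $\pi\circ j^{-1}\circ P\colon Z\to X_0$ is a c.b.\ surjection and Proposition \ref{no quotient cosingular} forces $X_0$ to contain $\MIN(c_0)$ — impossible, as $X_0 = R$ or $C$ is reflexive. \textbf{The main obstacle} is precisely that $j(X)$ need not be completely complemented in $Z$, so we cannot assume such a projection $P$ exists. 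The way around this is to dualize: $j^*\colon Z^* \to X^*$ is a c.b.\ surjection where $Z^* = (\bigoplus_i Y_i^*)_{\ell_1}$; then compose with $\pi^*$-\emph{adjoint}... no. Rather, compose the \emph{other} way: $(\pi)^*$ embeds $X_0^*$ into $X^*$, and a c.b.\ \emph{surjection} $X^* \to X_0^*$ would come from a c.b.\ \emph{embedding} $X_0 \to X$; we have $\pi\colon X\to X_0$ the wrong direction. Hmm — but $R,C$ are Hilbertian and homogeneous, and a surjection $X\to X_0$ between Hilbertian spaces, restricted suitably, does contain an embedding $X_0 \hookrightarrow X$ after identifying with an orthogonal complement of the kernel; since $X$ is Hilbertian, $\ker\pi$ is complemented as a Banach space and, by homogeneity of $X_0$, one extracts a c.b.\ embedding $X_0 \to X$. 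Thus we get a c.b.\ surjection $\rho\colon X^* \to X_0^*$. Composing, $\rho\circ j^*\colon (\bigoplus_i Y_i^*)_{\ell_1} \to X_0^*$ is a c.b.\ surjection. This is now an $\ell_1$-sum, so to conclude I would invoke (or prove inline) the $\ell_1$-analogue of Proposition \ref{no quotient cosingular}: a c.b.\ surjection from an $\ell_1$-sum onto an infinite-dimensional $X_0^*$ forces a copy of $\MIN(\ell_1)$ in $X_0^*$, provided each $Y_i^* \to X_0^*$ is strictly cosingular — which holds because strict cosingularity dualizes to strict singularity, and each $Y_i \to X_0$ strictly cosingular gives $X_0^* \to Y_i^*$ strictly singular, but we need $Y_i^* \to X_0^*$; this requires a little care with the ideal property and reflexivity of $X_0$. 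Since $X_0^* \in \{R, C\}$ is reflexive and contains no copy of $\ell_1$, this is the desired contradiction. I would write this out, flagging the complementation-avoidance via duality as the crux, and doing the $\ell_1$-sum gliding-hump argument (dual to the $c_0$ one) explicitly since it is short.
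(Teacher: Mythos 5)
Your proposal stalls at two genuine gaps, both in the duality reduction you settle on. First, the step where you ``extract a c.b.\ embedding $X_0\to X$'' from the c.b.\ surjection $\pi\colon X\to X_0$ is unjustified: splitting off the orthogonal complement $W$ of $\ker\pi$ gives only a \emph{bounded} inverse $(\pi\restriction_W)^{-1}\colon X_0\to W\subset X$, and there is no reason for it to be completely bounded. The corollary assumes $X$ is merely Hilbertian, not homogeneous, so $W$ carries an arbitrary Hilbertian operator space structure, and homogeneity of $X_0$ only controls maps $X_0\to X_0$, not maps from $X_0$ into a subspace of $X$ (compare: the identity $\MAX(\ell_2)\to\MIN(\ell_2)$ is a c.b.\ bijection whose inverse is not c.b.). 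Second, and more fatally, the hypothesis you would need for your $\ell_1$-analogue of Proposition \ref{no quotient cosingular} --- that every c.b.\ map $Y_i^*\to X_0^*$ is strictly cosingular --- does not follow from the stated hypothesis on maps $Y_i\to X_0$ (the classical duality between strict singularity and strict cosingularity transfers only in one direction, and not the one you need), and it is actually false in the very case the corollary is designed for: for $Y_i=\MIN(\ell_2)$ and $X_0=R$ one has $Y_i^*=\MAX(\ell_2)$ and $X_0^*=C$, every bounded map out of $\MAX(\ell_2)$ is completely bounded, so the identity of $\ell_2$ gives a c.b.\ \emph{surjection} $\MAX(\ell_2)\to C$, which is not strictly cosingular. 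Thus passing to adjoints destroys the cosingularity hypothesis, and no $\ell_1$-version of Proposition \ref{no quotient cosingular} can be brought to bear.

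You did isolate the right obstacle --- $j(X)$ need not be completely complemented in $(\bigoplus_i Y_i)_{c_0}$ --- but the paper's way around it is not duality; it is the injectivity of $X_0$. Since $R$ and $C$ are completely contractively complemented in $B(\ell_2)$, they are injective operator spaces, so the c.b.\ surjection $v=\pi\circ j^{-1}\colon j(X)\to X_0$ extends to a c.b.\ map $\widetilde v\colon (\bigoplus_i Y_i)_{c_0}\to X_0$, which is still surjective; Proposition \ref{no quotient cosingular}, applied with $X_0$ in the role of $X$, then forces $X_0$ to contain a copy of $\MIN(c_0)$, which is impossible since $X_0$ is Hilbertian. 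If you replace your entire dual argument by this one extension step, the proof closes immediately and stays within the hypotheses as stated.
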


\begin{proof}
Suppose, for the sake of contradiction, that $(\bigoplus_i Y_i)_{c_0}$ contains a subspace $X'$ completely isomorphic to $X$. Find a completely bounded surjection $v : X' \to X_0$. 
By the injectivity of $X_0$, $v$ extends to a c.b.~surjection $\widetilde{v} : (\bigoplus_i Y_i)_{c_0} \to X_0$. However, such maps cannot exist, by Proposition \ref{no quotient cosingular}.
\end{proof}
 
 We highlight a  corollary of the previous corollary:

 \begin{proof}[Proof of Theorem \ref{Thm.fix problem}]
 This follows from Corollary \ref{c:row_col_doesnt_embed} since if $X_0\in \{R,C\}$ we have that any completely bounded operator $\MIN(H)\to X_0$ is compact and, in particular, strictly cosingular. Indeed, suppose $u:E\to F$ is a compact operator and $q:F\to Y_0$ is a surjective operator with $\dim(Y_0)=\infty$. Then, the open mapping theorem implies that the induced map  $ \widetilde{qu}:F/\ker(qu)\to Y_0$  is an isomorphism. However this is impossible since $\widetilde{qu}$ is compact and $\dim(Y_0)=\infty$. 
 \end{proof}
  
 \begin{remark}\label{Remark1}
A version of Theorem \ref{Thm.fix problem} was proven in the proof of \cite[Theorem 4.2]{BragaChavezDominguez2020PAMS}. However, this proof   contains a gap in its last paragraph. Theorem \ref{Thm.fix problem} fixes this gap and \cite[Theorem 4.2]{BragaChavezDominguez2020PAMS} remains valid with no changes in its statement. We also point out that, fixing the gap of \cite[Theorem 4.2]{BragaChavezDominguez2020PAMS} is much easier and requires only some small modifications in its proof. But we chose to present this different proof here since it leads to a more general result (Proposition \ref{no quotient cosingular})  and applications (Corollaries \ref{c:row_col_doesnt_embed} and  \ref{c:min vs max}).
\end{remark}

We finish this paper with another application of Proposition \ref{no quotient cosingular}:

\begin{corollary}\label{c:min vs max}
Suppose $E$ and $(F_i)_{i\in\N}$ are  Banach spaces, with $\dim E = \infty$. Let $(Y_i)_{i\in\N}$ be a sequence of operator spaces   such that  $Y_i$ is completely isomorphic to $\MIN(F_i)$, $R$, or $C$. Then there is no c.b.~surjection from $(\bigoplus_i Y_i)_{c_0}$ onto $\MAX(E)$.
\end{corollary}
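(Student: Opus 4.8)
The plan is to apply Proposition~\ref{no quotient cosingular} with $X = \MAX(E)$, so the whole task reduces to verifying its hypothesis: every completely bounded map from each $Y_i$ into $\MAX(E)$ is strictly cosingular. Since strictly cosingular operators form an ideal and the $Y_i$ are (completely isomorphic to) one of the three types $\MIN(F_i)$, $R$, $C$, it suffices to check the hypothesis separately for a c.b.\ map $u\colon \MIN(F)\to\MAX(E)$, a c.b.\ map $u\colon R\to\MAX(E)$, and a c.b.\ map $u\colon C\to\MAX(E)$. In all three cases I would argue that $u$ is \emph{automatically compact}, which immediately gives strict cosingularity by exactly the open-mapping argument used at the end of the proof of Theorem~\ref{Thm.fix problem}: if $q\colon\MAX(E)\to Y_0$ is a surjection onto an infinite-dimensional space, then $\widetilde{qu}\colon \MIN(F)/\ker(qu)\to Y_0$ (resp.\ with $R$ or $C$ in place of $\MIN(F)$) would be a compact isomorphism onto an infinite-dimensional space, which is impossible.

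The heart of the matter is therefore the compactness of c.b.\ maps into $\MAX(E)$ from these three source spaces. For $u\colon\MIN(F)\to\MAX(E)$: a c.b.\ map out of a MIN space and into a MAX space factors (by the very definitions of $\MIN$ and $\MAX$ together with the metric mapping property) through the Banach-space level with control on norms, so $u$ being c.b.\ gives no extra rigidity; the relevant classical fact is that every bounded operator from a $C(K)$-type space (or $\MIN(F)$, whose underlying Banach space embeds in some $C(K)$) into a space with a suitable property is weakly compact, and one then upgrades; but in fact the cleaner route is: any c.b.\ map $\MIN(F)\to \MAX(E)$ is, at the Banach level, a bounded operator that is both ``minimal on the domain side'' and ``maximal on the target side,'' and such operators are known to be completely $1$-summing / nuclear-type, hence compact. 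For $u\colon R\to\MAX(E)$ and $u\colon C\to\MAX(E)$: here one uses that $\MAX(E)$ has the property that c.b.\ maps \emph{out of} $R$ or $C$ into a maximal operator space are compact --- this is a standard fact, e.g.\ because a c.b.\ map from $R$ (or $C$) into any operator space is automatically in the Hilbert--Schmidt-type class relative to the target's column/row structure, and pairing this with maximality of $\MAX(E)$ forces the associated matrix of coefficients to be $\ell_2$-summable, giving compactness. In each case I would cite the relevant lemma from \cite{Pisier-OS-book} rather than reprove it.

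The main obstacle, then, is pinning down and citing the precise compactness statements: the claim ``every c.b.\ map from $\MIN(F)$, $R$, or $C$ into $\MAX(E)$ is compact.'' For $R$ and $C$ this should follow from the fact that $\CB(R,\MAX(E)) = \CB(R, Y)$ for \emph{any} operator space $Y$ coincides isometrically with a space of sequences with values in $Y$ that are square-summable (the ``$R$ is homogeneous and $\CB(R,Y)$ is a row/column Hilbert module''), combined with maximality of $\MAX(E)$ which collapses the c.b.\ norm on a rank-one map to the operator norm; a square-summable sequence of coefficients yields a compact operator. For $\MIN(F)$, the point is that $\CB(\MIN(F),\MAX(E))$ coincides with the bounded operators $F\to E$ that factor appropriately, and again one invokes a known result (a bounded operator that is simultaneously ``$\MIN$-to-$\MAX$'' is $2$-summing, hence compact on Hilbertian or reflexive pieces, and in general strictly cosingular). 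If a fully clean compactness statement is not available in the literature in the generality needed, the fallback is to prove strict cosingularity directly: suppose $q u$ is surjective onto infinite-dimensional $Y_0$; lift to get that $\MAX(E)$ has a quotient that is a quotient of $\MIN(F)$ (resp.\ $R$, $C$), and derive a contradiction from the incompatibility of the operator space structures --- e.g.\ $R$ and $C$ have no infinite-dimensional quotient that is $\MAX$, and similarly for $\MIN(F)$, because $\MAX(E)$ restricted to any infinite-dimensional subspace or quotient retains enough ``maximality'' to clash with the ``minimality/row/column'' rigidity of the source. This direct route mirrors the gliding-hump and homogeneity arguments already used repeatedly in Section~\ref{Secc0sum}, so it is the natural place to land if the black-box citation proves awkward.
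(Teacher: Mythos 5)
Your reduction is incomplete at the very first step: Proposition~\ref{no quotient cosingular} does \emph{not} say that no c.b.\ surjection exists; its conclusion is that the target $X$ contains a completely isomorphic copy of $\MIN(c_0)$. So even after verifying strict cosingularity of all c.b.\ maps $Y_i\to\MAX(E)$, you must still rule out $\MIN(c_0)\subset\MAX(E)$, and this is not automatic (at the Banach level $E$ may well contain $c_0$). The paper handles this by noting that $\MIN(c_0)$, being minimal, is exact, and invoking \cite[Corollary 2.9]{Oikhberg2004} to exclude exact infinite-dimensional copies inside a maximal operator space. Your proposal omits this entirely ("the whole task reduces to verifying its hypothesis"), which is a genuine logical gap.

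The second, more serious problem is the compactness claim on which your verification rests: it is false in all three cases. Since $\CB(\MIN(F),\MAX(E))=\Gamma_2^*(F,E)$ (Paulsen), the formal identity $\MIN(L_\infty)\to\MAX(L_1)$ is completely bounded (it factors as $L_\infty\to L_2\to L_1$, with the first leg $2$-summing and the adjoint of the second leg $2$-summing), yet it is not compact (look at the Rademachers). Likewise, the Gaussian embedding $\ell_2\to L_1$ gives a non-compact (indeed isomorphically embedding) c.b.\ map $R\to\MAX(L_1)$: its adjoint $L_\infty\to\ell_2$ is $2$-summing by Grothendieck, hence c.b.\ as a map $\MIN(L_\infty)\to C$, and c.b.-ness of the adjoint gives c.b.-ness of the map into $\MAX(L_1)$ by the duality $\MAX(E)^*=\MIN(E^*)$, $R^*=C$; in particular your claim that c.b.-ness forces square-summable coefficients also fails. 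Your fallback "$2$-summing, hence \dots in general strictly cosingular" is a non sequitur as well: $2$-summing operators need not be strictly cosingular (a quotient map $\ell_1\to\ell_2$ is $1$-summing by Grothendieck and surjective). The argument that actually works --- and is what the paper does --- is dual: show that the \emph{adjoint} $T^*$ is $2$-summing (for $\MIN(F_i)$ this comes from the $\Gamma_2^*$ description; for $R$ and $C$ from $T^*\colon\MIN(E^*)\to C$ or $R$ being c.b.\ and \cite[Proposition 5.11]{Pisier-OH}), and then observe that if $qT$ were onto an infinite-dimensional quotient, $T^*q^*$ would be an isomorphic embedding, contradicting the strict singularity of a $2$-summing operator. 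Without replacing your compactness/``$2$-summing implies strictly cosingular'' steps by this adjoint argument, the proof does not go through.
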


\begin{proof}
Note that $\MIN(c_0)$ (just as any minimal space) is exact, hence, by \cite[Corollary 2.9]{Oikhberg2004}, $\MAX(E)$ does not contain a completely isomorphic copy of $\MIN(c_0)$. To apply Proposition \ref{no quotient cosingular}, one therefore has to verify that, for each $i$, every completely bounded operator  $Y_i\to \MAX(E)$ is strictly cosingular.

First assume $Y_i$ is completely isomorphic to $\MIN(F_i)$. By \cite[Section 4]{Paulsen96}, $\CB(Y_i, \MAX(E))$ coincides with $\Gamma_2^*(F_i,E)$; this ideal consists of operators $T : F_i \to E$ for which there exists a factorization $T = vu$, with $u \in \Pi_2(F_i, H)$ ($H$ is a Hilbert space) and $v \in \Pi_2^*(H,E)$ (that is, $v^*$ is $2$-summing); see \cite[Chapter 7]{DJT} for more information. Therefore, we have to show that, for any $E$ and $F$, any element of $\Gamma_2^*(F,E)$ is strictly cosingular. For that,   fix $T \in \Gamma_2^*(F,E)$. By the preceding discussion, $T^*$ is $2$-summing. If $q : E \to E_0$ is a quotient map and   $\dim E_0 = \infty$, then $T^* q^*$ is $2$-summing as well, hence not an isomorphic injection. Thus, $q T$ is not a surjection.

Now suppose $Y_i$ is completely isomorphic to either $C$ or $R$. By \cite[Proposition 5.11]{Pisier-OH}, any c.b.~map from $\MIN(G)$,  for any operator space $G$, to $R$ or $C$ is $2$-summing. If $T : Y_i \to \MAX(F)$ is c.b., then $T^*$ is $2$-summing. Then $T^*$ is not an isomorphism on any infinite dimensional subspace; this establishes the strict cosingularity of $T$.
\end{proof}

\end{document}